\newtheorem{thm}{Theorem}
\newtheorem{lemma}[thm]{Lemma}
\theoremstyle{definition}
\theoremstyle{remark}
\numberwithin{thm}{section}
\DeclareMathAlphabet{\mathsfsl}{OT1}{cmss}{m}{sl}
\renewcommand{\phi}{\varphi}
\newcommand{\argmin}{\operatorname*{arg\; min}}
\newcommand{\rank}{\operatorname{rank}}
\newcommand{\diag}{\operatorname{diag}}
\newcommand{\psdge}{\succcurlyeq}
\newcommand{\dist}{\operatorname{dist}}
\newcommand{\bx}{\boldsymbol{x}}
\newcommand{\by}{\boldsymbol{y}}
\newcommand{\bZ}{\boldsymbol{Z}}
\newcommand{\bz}{\boldsymbol{z}}
\newcommand{\bX}{\boldsymbol{X}}
\newcommand{\bL}{\boldsymbol{L}}
\newcommand{\bK}{\boldsymbol{K}}
\newcommand{\bP}{\boldsymbol{P}}
\newcommand{\bSigma}{\boldsymbol\Sigma}
\newcommand{\bU}{\boldsymbol{U}}
\newcommand{\bV}{\boldsymbol{V}}
\def\reals{\mathbb{R}}
\def\bx{\boldsymbol{x}}
\def\bu{\boldsymbol{u}}
\def\b0{\mathbf{0}}
\def\bP{\boldsymbol{P}}
\def\bSigma{\boldsymbol\Sigma}
\def\bU{\boldsymbol{U}}
\def\bC{\boldsymbol{C}}
\def\bv{\boldsymbol{v}}
\def\bA{\boldsymbol{A}}
\def\bY{\boldsymbol{Y}}
\def\bB{\boldsymbol{B}}
\def\bH{\boldsymbol{H}}
\def\bI{\mathbf{I}}
\def\tr{\mathrm{tr}}
\def\rmL{\mathrm{L}}
\def\Sp{\mathrm{Sp}}
\def\Col{\mathrm{Col}}
\journal{Linear Algebra and its Applications}
\begin{document}
\begin{frontmatter}

\title{Disentangling Orthogonal Matrices}

\author{Teng Zhang\fnref{myfootnote}}
\address{Department of Mathematics, University of Central Florida\\  Orlando, Florida 32816, USA}
\fntext[myfootnote]{teng.zhang@ucf.edu}

\author{Amit Singer\fnref{myfootnote1}}
\address{Department of Mathematics and The Program in Applied and Computational Mathematics (PACM), Princeton University\\ Princeton, New Jersey 08544, USA}
\fntext[myfootnote1]{amits@math.princeton.edu}





\begin{abstract}
Motivated by a certain molecular reconstruction methodology in cryo-electron microscopy, we consider the problem of solving a linear system with two unknown orthogonal matrices, which is a generalization of the well-known orthogonal Procrustes problem. We propose an algorithm based on a semi-definite programming (SDP) relaxation, and give a theoretical guarantee for its performance. Both theoretically and empirically, the proposed algorithm performs better than the na\"{i}ve approach of solving the linear system directly without the orthogonal constraints. We also consider the generalization to linear systems with more than two unknown orthogonal matrices.
\end{abstract}
\begin{keyword}
SDP relaxation \sep orthogonal Procrustes problem \sep cryo-EM
MSC[2010] 15A24\sep  90C22
\end{keyword}

\end{frontmatter}


\section{Introduction}
In this paper, we consider the following problem: given known matrices $\bX_1, \bX_2\in\reals^{N\times D}$ and unknown orthogonal matrices $\bV_1, \bV_2\in O(D)$, recover $\bV_1$ and $\bV_2$ from $\bX_3\in\reals^{N\times D}$ defined by  \begin{equation}\label{eq:original0}\bX_3=\bX_1\bV_1+\bX_2\bV_2.\end{equation}
A na\"{i}ve approach would be solving \eqref{eq:original0} while dropping the constraints of orthogonality on $\bV_1$ and $\bV_2$. This linear system has $N D$ linear constraints and $2D^2$ unknown variables, therefore, this approach can recover $\bV_1$ and $\bV_2$ when $N\geq 2D$. The question is, can we develop an algorithm that takes the constraints of orthogonality into consideration, so that it is able to recover $\bV_1$ and $\bV_2$ when $N<2D$, and more stably when the observation $\bX_3$ is contaminated by noise?

The associated least squares problem \begin{equation}\label{eq:original00}\min_{\bV_1, \bV_2\in O(D)} \|\bX_1\bV_1+\bX_2\bV_2-\bX_3\|_F^2\end{equation} can be considered as a generalization of the well-known orthogonal Procrustes problem~\cite{gower2004procrustes}:
\begin{equation}\label{eq:original01}
\min_{\bV\in O(D)} \|\bX_1\bV-\bX_2\|_F^2,
\end{equation}
with the main difference being that the minimization in \eqref{eq:original00} is over two orthogonal matrices instead of just one as in \eqref{eq:original01}. Although the orthogonal Procrustes problem has a closed form solution using the singular value decomposition,  problem \eqref{eq:original00} does not enjoy this property.


Still, \eqref{eq:original00} can be reformulated so that it belongs to a wider class of problems called the little Grothendieck problem~\cite{Afonso2013}, which again belongs to QO-OC (Quadratic Optimization under Orthogonality Constraints) considered by Nemirovski~\cite{Nemirovski2007}. QO-OCs have been well studied and include many important problems as special cases, such as Max-Cut~\cite{Goemans1995} and generalized orthogonal Procrustes \cite{Gower1975,Berge1977,Shapiro1988}  \[\min_{\bV_1, \ldots, \bV_n\in O(D)}\sum_{1\leq i,j\leq n}\|\bX_i\bV_i-\bX_j\bV_j\|_F^2,\] which
has applications to areas such as psychometrics, image and shape analysis and biometric identification.

The non-commutative little Grothendieck problem~\cite{Bandeira2016} is defined by:
\begin{equation}\label{eq:littleGrothendieck}
\min_{\bV_1, \ldots, \bV_n\in O(D)}\sum_{i,j=1}^n\tr(\bC_{ij}\bV_i\bV_j^\top).\end{equation}
Problem \eqref{eq:original00} can be considered as a special case of \eqref{eq:littleGrothendieck} with $n=3$. The argument is as follows. For convenience, we homogenize \eqref{eq:original0} by introducing a slack unitary variable $\bV_3\in O(D)$ and consider the
augmented linear system
\begin{equation}\bX_1\bV_1+\bX_2\bV_2+\bX_3\bV_3=\mathbf{0}\label{eq:original}\end{equation}
Clearly, if $(\bV_1, \bV_2, \bV_3)$ is a solution to \eqref{eq:original}, then the pair $(-\bV_1\bV_3^\top, -\bV_2\bV_3^\top)$ is a solution to the original linear system \eqref{eq:original0}. The least squares formulation corresponding to \eqref{eq:original} is
\begin{equation}\label{eq:original1}
\min_{\bV_1, \bV_2, \bV_3\in O(D)} \|\bX_1\bV_1+\bX_2\bV_2+\bX_3\bV_3\|_F^2.
\end{equation}
Let $\bC\in \reals^{3D\times 3D}$ be a Hermitian matrix with the $(i,j)-$th $D\times D$ block given by $\bC_{ij}=\bX_i^\top\bX_j$. The least squares problem \eqref{eq:original1} is equivalent to
\begin{equation}\label{eq:original15}
\min_{\bV_1, \bV_2, \bV_3\in O(D)} \sum_{i,j=1}^3 \tr(\bC_{ij}\bV_j\bV_i^\top),
\end{equation}
which is the little Grothendieck problem~\eqref{eq:littleGrothendieck} with $n=3$.

\subsection{Motivation}
Our problem arises naturally in single particle reconstruction (SPR) from cryo-electron microscopy (EM), where the goal is to determine the 3D structure of a macromolecule complex from  2D projection
images of identical, but randomly oriented, copies of the macromolecule. Zvi Kam~\cite{kam1980} showed that the spherical harmonic expansion coefficients of the Fourier transform of the 3D molecule, when arranged as matrices, can be estimated from 2D projection images up to an orthogonal matrix (for each degree of spherical harmonics). Based on this observation, Bhamre et al.~\cite{Tejal2014} recently proposed ``Orthogonal Replacement'' (OR), an orthogonal matrix retrieval procedure in which cryo-EM projection images are available for two unknown structures $\phi^{(1)}$ and $\phi^{(2)}$ whose difference $\phi^{(2)} - \phi^{(1)}$ is known. It follows from Kam's theory that we are given the spherical harmonic expansion  coefficients of $\phi^{(1)}$ and $\phi^{(2)}$ up to an orthogonal matrix, and their difference. Then the problem of recovering the spherical harmonic expansion coefficients of $\phi^{(1)}$ and $\phi^{(2)}$ is reduced to the mathematical problem \eqref{eq:original0}. If \eqref{eq:original0} can be solved for smaller $N$, then we can reconstruct $\phi^{(1)}$ and $\phi^{(2)}$ with higher resolution. The cryo-EM application serves as the main motivation of this paper. We refer the reader to \cite{Tejal2014} for further details regarding the specific application to cryo-EM.

%
%
%
%
%
%

\section{Algorithm and Main result}\label{sec:main}
The little Grothendieck problem and QO-OCs are generally intractable, for example, it is well-known that the Max-Cut Problem is NP-hard. Many approximation algorithms have been proposed and analyzed \cite{Goemans1995,Nemirovski2007,So2007,So2011,Afonso2013,Naor2013}, and the principle of these algorithms is to apply a semi-definite programming (SDP) relaxation followed by a rounding procedure.  The SDP can be solved in polynomial time (for any finite precision).  Based on the same principle, we relax the problem \eqref{eq:original15} to an SDP as follows.

Let $\bH\in \reals^{3D\times 3D}$ be a Hermitian matrix with the $(i,j)-$th $D\times D$ block given by $\bH_{ij}=\bV_i\bV_j^\top$, that is,
\[
\bH=\left( \begin{array}{c}
\bV_1 \\
\bV_2 \\
 \bV_3 \end{array} \right)\left(
\bV_1^\top,
\bV_2^\top,
 \bV_3^\top \right).
\] Then \eqref{eq:original15} is equivalent to \[\min_{\bH\psdge\mathbf{0}, \bH_{ii}=\bI, \mathrm{rank}(\bH)=D}\tr(\bC\bH),\] where $\bH\psdge\mathbf{0}$ denotes that $\bH$ is a positive semidefinite matrix. The only constraint which is
non-convex is the rank constraint. Dropping it leads to the following SDP:
\begin{equation}\label{eq:original2}
\text{min $\tr(\bC\bH)$, subject to $\bH\psdge \mathbf{0}$ and $\bH_{ii}=\bI$.}
\end{equation}
If the solution satisfies $\rank(\bH)=D$, then $\bV_1$, $\bV_2$ and $\bV_3$ are extracted by applying decomposition to $\bH$ as follows. Let $\bH=\bU\bU^\top$, where
\[
\bU=\left( \begin{array}{c}
\bU_1 \\
\bU_2 \\
 \bU_3 \end{array} \right)\in\reals^{3D\times D},\,\,\,\text{and $\bU_i\in\reals^{D\times D}$}
\]
then $\bV_i=\bU_i$, $1\leq i\leq 3$ would be a solution.

Notice that the solution to \eqref{eq:original} is not unique: if $(\bU_1, \bU_2, \bU_3)$ satisfies \eqref{eq:original}, then for any $\bU\in O(D)$, the triplet $(\bU_1\bU, \bU_2\bU, \bU_3\bU)$ satisfies \eqref{eq:original} as well. Although the solution to \eqref{eq:original} is not unique, the solution to the original problem~\eqref{eq:original00} is uniquely given by $(-\bU_1\bU_3^\top, -\bU_2\bU_3^\top)$.

When $\rank(\bH)>D$, then there does not exist $\bU\in\reals^{3D\times D}$ such that $\bH=\bU\bU^\top$ and the linear system~\eqref{eq:original0} does not have a solution. However, we could employ the rounding procedure described in~\cite{Afonso2013}: Let $\bH=\bU\bU^\top$, where
\[
\bU=\left( \begin{array}{c}
\bU_1 \\
\bU_2 \\
 \bU_3 \end{array} \right)\in\reals^{3D\times 3D},\,\,\,\text{and $\bU_i\in\reals^{D\times 3D}$},
\]
then we generate approximate solutions by $\bV_1=f(-\bU_1\bU_3^\top)$ and $\bV_2=f(-\bU_2\bU_3^\top)$, where $f$ is a rounding procedure to the nearest orthogonal matrix as follows. For any $\bZ\in\reals^{D\times D}$ with SVD decomposition $\bZ=\bU_{\bZ}\Sigma_{\bZ}\bV_{\bZ}^\top$, $f(\bZ)=\bU_{\bZ}\bV_{\bZ}^\top =\bZ(\bZ^\top\bZ)^{-\frac{1}{2}}$~\cite{Keller1975}. 


\subsection{Main results}

The main contribution of this paper is a particular theoretical guarantee for the SDP approach to return a solution of rank $D$ and recover $\bV_1$ and $\bV_2$ exactly. We start with a theorem that controls the lower bound of the objective function in \eqref{eq:original2}. Throughout the paper, for any $d$-dimensional subspace $\rmL$ in $\reals^D$, $\bP_{\rmL}$ is a projector of size $D\times d$ to the subspace. 

\begin{thm}\label{thm:stability}
For generic $\bX_1,\bX_2\in\reals^{N\times D}$ with $N\geq D+1$, $\bX_3=-\bX_1-\bX_2$, $k\geq D$,  and
\[
\mathcal{U}=\left\{\bU=\left( \begin{array}{c}
\bU_1 \\
\bU_2 \\
 \bU_3 \end{array} \right)\in\reals^{3D\times k}: \bU_i\in\reals^{D\times k},  \bU_1\bU_1^\top=\bU_2\bU_2^\top=\bU_3\bU_3^\top=\bI,\right\},
\]
then for any $\bU\in\mathcal{U}$,
\begin{equation}\label{eq:stability}
\|\bX\bU\|_F\geq c(\bX_1, \bX_2)\|\bU^\top\bP_{\rmL_1^\perp}\|_F^2,
\end{equation}
where $\bX=(\bX_1,\bX_2,\bX_3)\in\reals^{N\times 3D}$, \[
\rmL_1=\{\bx\in\mathbb{R}^{3D}: \bx=(\bv,\bv,\bv)\,\,\,\text{for some $\bv\in\mathbb{R}^{D}$}\},
\]
and $c(\bX_1, \bX_2)$ is a constant depending on $\bX_1$ and $\bX_2$ and it is positive for generic $\bX_1$ and $\bX_2$.
\end{thm}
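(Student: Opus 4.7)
The starting point is the identity $\bX_1+\bX_2+\bX_3 = \mathbf{0}$, which forces $\rmL_1 \subseteq \ker\bX$, since $(\bv,\bv,\bv)^\top$ is annihilated for every $\bv \in \mathbb{R}^D$. Introducing $\bar{\boldsymbol{U}} = \tfrac{1}{3}(\bU_1+\bU_2+\bU_3)$ and $\tilde{\boldsymbol{U}}_i = \bU_i - \bar{\boldsymbol{U}}$, a direct block computation (using $\|\bU\|_F^2 = 3D$) gives
\[
\|\bU^\top\bP_{\rmL_1^\perp}\|_F^2 = \sum_{i=1}^3 \|\tilde{\boldsymbol{U}}_i\|_F^2,\qquad \bX\bU = \sum_{i=1}^3 \bX_i\bU_i = \sum_{i=1}^3 \bX_i \tilde{\boldsymbol{U}}_i,
\]
so \eqref{eq:stability} reduces to lower-bounding $\|\sum_i \bX_i \tilde{\boldsymbol{U}}_i\|_F$ by a constant multiple of $\sum_i\|\tilde{\boldsymbol{U}}_i\|_F^2$ subject to the Stiefel constraints $\bU_i\bU_i^\top = \bI$.

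The plan is a compactness argument on $\mathcal{U}$, which is compact as a product of three $D$-by-$k$ Stiefel manifolds. Define
\[
c(\bX_1,\bX_2) \defeq \inf_{\bU\in\mathcal{U},\ \bU^\top\bP_{\rmL_1^\perp}\neq\mathbf{0}}\ \frac{\|\bX\bU\|_F}{\|\bU^\top\bP_{\rmL_1^\perp}\|_F^2},
\]
and show this infimum is strictly positive on a Zariski-open set of $(\bX_1,\bX_2)$. A minimising sequence $\bU^{(n)}$ either (i) stays bounded away from $\{\bU^\top\bP_{\rmL_1^\perp}=\mathbf{0}\}$ and converges, along a subsequence, to some $\bU^\infty \in \mathcal{U}$ with $\bX\bU^\infty=\mathbf{0}$ but $(\bU^\infty)^\top\bP_{\rmL_1^\perp}\neq\mathbf{0}$; or (ii) accumulates on the closed set $\mathcal{U}\cap\{\bU^\top\bP_{\rmL_1^\perp}=\mathbf{0}\}$. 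For (ii), writing each block as $\bU^{(n)}_i = \bU_{**} + \eps_n \boldsymbol{E}^{(n)}_i$ about a limit $\bU_{**}$, with the linearised Stiefel condition making $\boldsymbol{E}^{(n)}_i\bU_{**}^\top$ skew-symmetric, one computes $\|\bX\bU^{(n)}\|_F = \eps_n\|\sum_i\bX_i\boldsymbol{E}^{(n)}_i\|_F$ and $\|(\bU^{(n)})^\top\bP_{\rmL_1^\perp}\|_F^2 = \eps_n^2\sum_i\|\boldsymbol{E}^{(n)}_i - \bar{\boldsymbol{E}}^{(n)}\|_F^2$. The ratio therefore scales like $\eps_n^{-1}\to\infty$, except in the degenerate event that $\sum_i\bX_i\boldsymbol{E}^{(n)}_i$ also vanishes in the limit, which amounts to the existence of non-trivial skew-symmetric $\boldsymbol{T}_1,\boldsymbol{T}_2$ satisfying $\bX_1\boldsymbol{T}_1 + \bX_2\boldsymbol{T}_2 = \mathbf{0}$ --- a linear system whose $D(D-1)$-dimensional domain is generically injected into $\mathbb{R}^{N\times D}$ when $N \geq D+1$, ruling out this degeneracy.

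The main obstacle is case (i). Here I use the explicit description
\[
\ker\bX = \rmL_1 + \{(\boldsymbol{w}_1,\boldsymbol{w}_2,\mathbf{0})^\top : (\boldsymbol{w}_1,\boldsymbol{w}_2)\in\ker(\bX_1,\bX_2)\}
\]
to write a hypothetical $\bU^\infty$ as $\bU^\infty_i = \bU^\infty_3 + \boldsymbol{W}_i$ ($i=1,2$) with the columns of $(\boldsymbol{W}_1,\boldsymbol{W}_2)$ in $\ker(\bX_1,\bX_2)$. The three Stiefel identities then become the coupled polynomial system
\[
\bU^\infty_3 \boldsymbol{W}_i^\top + \boldsymbol{W}_i (\bU^\infty_3)^\top + \boldsymbol{W}_i\boldsymbol{W}_i^\top = \mathbf{0},\qquad i=1,2.
\]
To conclude, one must show that for generic $(\bX_1,\bX_2)$ with $N \geq D+1$ this system forces $\boldsymbol{W}_1 = \boldsymbol{W}_2 = \mathbf{0}$. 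I would attack this by exhibiting a single explicit pair $(\bX_1,\bX_2)$ for which the solution set collapses --- for instance an almost-trivial base choice plus a small Gaussian perturbation, for which the skew-symmetry type argument of the previous paragraph can be combined with the quadratic term $\boldsymbol{W}_i\boldsymbol{W}_i^\top$ iteratively --- and invoking that the bad locus, being cut out by polynomial equations in $(\bX_1,\bX_2)$, is Zariski closed of lower dimension, hence has empty interior. This transversality-and-genericity step is the technical crux; once it is in hand, the compactness argument yields $c(\bX_1,\bX_2) > 0$, completing the proof of \eqref{eq:stability}.
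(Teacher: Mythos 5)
Your compactness set-up is a reasonable way to formulate the claim, and your reduction of $\|\bU^\top\bP_{\rmL_1^\perp}\|_F^2$ to $\sum_i\|\tilde{\bU}_i\|_F^2$ as well as the description of $\ker\bX$ are correct. However, there is a genuine gap in your treatment of case (ii), and it is exactly the part that forces the theorem to be stated with the \emph{square} of $\|\bU^\top\bP_{\rmL_1^\perp}\|_F$ in the right-hand side.

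You argue that near a point $\bU_{**}$ of $\mathcal{U}\cap\{\bU^\top\bP_{\rmL_1^\perp}=\mathbf{0}\}$ the ratio scales like $\eps_n^{-1}$, except in the ``degenerate event'' that the first-order term $\sum_i\bX_i\boldsymbol{E}_i$ vanishes, and you dismiss that event by counting degrees of freedom in skew-symmetric $\boldsymbol{T}_1,\boldsymbol{T}_2$. The count is wrong because the tangent space to the Stiefel manifold $\{\bU_i\in\reals^{D\times k}:\bU_i\bU_i^\top=\bI\}$ at $\bU_{**}$ is \emph{not} parameterised by skew matrices alone when $k>D$: a tangent vector decomposes as $\boldsymbol{E}_i=\boldsymbol{T}_i\bU_{**}+\boldsymbol{W}_i$ with $\boldsymbol{T}_i$ skew and $\boldsymbol{W}_i\bU_{**}^\top=\mathbf{0}$, and the $\boldsymbol{W}$-component has $D(k-D)$ degrees of freedom with \emph{no} skew-symmetry constraint. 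Taking $\boldsymbol{T}_i=\mathbf{0}$, $\boldsymbol{W}_1=\boldsymbol{w}_1\boldsymbol{v}^\top$, $\boldsymbol{W}_2=\boldsymbol{w}_2\boldsymbol{v}^\top$, $\boldsymbol{W}_3=\mathbf{0}$ for a unit $\boldsymbol{v}$ orthogonal to the rows of $\bU_{**}$ and any $(\boldsymbol{w}_1,\boldsymbol{w}_2)\in\ker(\bX_1,\bX_2)$ gives an admissible first-order direction with $\sum_i\bX_i\boldsymbol{E}_i=\mathbf{0}$; for every $N<2D$ the kernel is nontrivial, so the ``degeneracy'' is generic, not exceptional. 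Along such directions both $\|\bX\bU^{(n)}\|_F$ and $\|(\bU^{(n)})^\top\bP_{\rmL_1^\perp}\|_F^2$ are $O(\eps_n^2)$, the ratio stays bounded, and whether it stays bounded \emph{away from zero} is precisely the second-order question your argument never reaches. This is also why the statement has $\|\cdot\|_F^2$ on the right: if your first-order picture were the whole story, the theorem would hold with $\|\bU^\top\bP_{\rmL_1^\perp}\|_F$ to the first power.

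The paper's proof takes a different, fully quantitative route that lives entirely at second order. It reduces to $N=D+1$, projects $\bU$ onto the affine set $\{\tilde\bU:\bX\tilde\bU=\mathbf{0},\ \tilde\bU_3=\bU_3\}$, writes the projection explicitly in terms of matrices $\bL$, $\bK$ and a basis $\bY_1,\bY_2$ of $\ker(\bX_1,\bX_2)$, and then extracts eigenvalue lower bounds on $\bL\bY_i+\bY_i^\top\bL^\top$ from the Stiefel constraints. The technical core is its Lemma~\ref{lemma:pertubation}, which converts those eigenvalue bounds into $\|\bL\|_F\lesssim\|\bX\bU\|_F$; a separate argument gives $\|\bK\bY_i\|_F\lesssim\sqrt{\|\bX\bU\|_F}$, and the square root here is the source of the quadratic scaling in \eqref{eq:stability}. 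Your proposal has no analogue of this lemma, and the compactness framework as written does not supply one. Your case (i) plan (exhibit one good $(\bX_1,\bX_2)$ and invoke Zariski-closedness of the bad locus) could plausibly be made rigorous for the qualitative positivity claim, but you leave the existence of a witness unestablished; and in any event case (ii) remains the real obstruction.
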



Based on Theorem~\ref{thm:stability} and $\|\bX\bU\|_F^2=\tr(\bC\bH)$, this paper proves that when $N\geq D+1$, the SDP method recovers the orthogonal matrices for generic cases, i.e., the property holds for $(\bX_1,\bX_2)$ that lies in a dense open subset of $\mathbb{R}^{N\times D}\times \mathbb{R}^{N\times D}$. This is formally stated next. Its part (b) shows that the SDP method is stable to noise.
\begin{thm}\label{thm:probablistic}
(a) For generic $\bX_1,\bX_2\in\reals^{N\times D}$ with $N\geq D+1$, the SDP method recovers $\bV_1$ and $\bV_2$ exactly. \\
(b) Under the assumptions in (a), and suppose that the input matrices of the SDP method are $\hat{\bX_i}$ such that $\|\hat{\bX}_i-\bX_i\|_F\leq \epsilon$ for $1\leq i\leq 3$, then the SDP method recovers $\{\bV_i\}_{i=1}^2$ approximately in the sense that the error between the recovered orthogonal matrix $\hat{\bV}_i$ and the true orthogonal matrix $\bV_i$, $\|\hat{\bV}_i-\bV_{i}\|_F$, is bounded above by $C\sqrt{\epsilon}$ for some $C$ that does not depend on $\epsilon$.
\end{thm}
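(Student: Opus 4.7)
The plan is to reduce Theorem~\ref{thm:probablistic} to Theorem~\ref{thm:stability} by normalizing the ground truth through an orthogonal change of variables. Let $(\bV_1,\bV_2)$ denote the true pair; the substitution $\bX_i \mapsto \bX_i \bV_i$ for $i=1,2$ together with $\bX_3 \mapsto -\bX_3$ produces data with $\bX_1+\bX_2+\bX_3 = 0$, bringing the problem into the regime of Theorem~\ref{thm:stability}. On the SDP variable, this substitution acts by block-diagonal orthogonal conjugation $\bH \mapsto \boldsymbol{W}^\top \bH \boldsymbol{W}$ with $\boldsymbol{W} = \diag(\bV_1,\bV_2,-\bI)$, which preserves positive semidefiniteness, the block-diagonal constraints $\bH_{ii} = \bI$, rank, and the objective $\tr(\bC\bH)$. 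Since orthogonal maps preserve genericity, Theorem~\ref{thm:stability} applies to the transformed data with a positive constant $c$. Working henceforth in the transformed frame, I denote the ground-truth SDP solution by $\bH^\star := \mathbf{1}\mathbf{1}^\top \otimes \bI_D$ (all identity blocks) and note that the original-frame recovery is obtained from any transformed SDP optimum $\hat\bH$ by $\hat\bV_i = f(\bV_i\,\hat\bH_{i3})$ for $i = 1,2$.

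For part (a), $\bH^\star$ is feasible with value $\|\bX_1+\bX_2+\bX_3\|_F^2 = 0$, while $\tr(\bC\bH) = \|\bX\bU\|_F^2 \ge 0$ for every factorization $\bH = \bU\bU^\top$; hence the SDP optimum equals $0$. For any optimizer, the diagonal constraint forces $\bU_i\bU_i^\top = \bI$, so $\bU \in \mathcal{U}$, and Theorem~\ref{thm:stability} gives $\|\bU^\top\bP_{\rmL_1^\perp}\|_F^2 \le 0$. Thus the columns of $\bU$ lie in $\rmL_1$, giving $\bU_1 = \bU_2 = \bU_3$; combined with $\bU_i\bU_i^\top = \bI$ this pins down $\bH = \bH^\star$, whose rank equals $D$. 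The rounding then yields $\hat\bV_i = f(\bV_i) = \bV_i$.

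For part (b), let $\hat\bH$ be the SDP optimum for the perturbed data $\hat\bX$. Feasibility of $\bH^\star$ yields
\[
\tr(\hat\bC\,\hat\bH) \le \tr(\hat\bC\,\bH^\star) = \|\hat\bX_1+\hat\bX_2+\hat\bX_3\|_F^2 \le 9\epsilon^2,
\]
using $\bX_1+\bX_2+\bX_3 = 0$ and the triangle inequality. Factoring $\hat\bH = \hat\bU\hat\bU^\top$, the constraint $\hat\bU_i\hat\bU_i^\top = \bI$ implies $\|\hat\bU\|_{\mathrm{op}} \le \sqrt{3}$, so
\[
\|\bX\hat\bU\|_F \le \|\hat\bX\hat\bU\|_F + \|(\bX-\hat\bX)\hat\bU\|_F = O(\epsilon).
\]
Theorem~\ref{thm:stability} then yields $\|\hat\bU^\top \bP_{\rmL_1^\perp}\|_F = O(\sqrt{\epsilon})$. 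Setting $\bQ_0 := \tfrac{1}{3} \sum_i \hat\bU_i$, this gives $\|\hat\bU_i - \bQ_0\|_F = O(\sqrt{\epsilon})$ for each $i$. Expanding $\hat\bH_{i3} = \hat\bU_i\hat\bU_3^\top = \bQ_0\bQ_0^\top + O(\sqrt{\epsilon})$ and using $\hat\bU_i\hat\bU_i^\top = \bI$ to deduce $\bQ_0\bQ_0^\top = \bI + O(\sqrt{\epsilon})$ produces $\hat\bH_{i3} = \bI + O(\sqrt{\epsilon})$. Therefore the rounding input $\bV_i\,\hat\bH_{i3} = \bV_i + O(\sqrt{\epsilon})$ lies in a neighborhood of $O(D)$ on which the polar retraction $f$ is Lipschitz, yielding $\|\hat\bV_i - \bV_i\|_F \le C\sqrt{\epsilon}$.

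The main obstacle is the quadratic-to-linear passage in Theorem~\ref{thm:stability}: the $O(\epsilon^2)$ bound on the SDP objective only certifies an $O(\sqrt{\epsilon})$ distance of $\hat\bU$ from $\rmL_1$, and propagating this through the quadratic orthogonality constraint $\hat\bU_i\hat\bU_i^\top = \bI$ and the nonlinear polar rounding $f$ is the technical heart of part (b). Two enabling facts make the propagation clean: the uniform bound $\|\hat\bU\|_{\mathrm{op}} \le \sqrt{3}$, which allows interchange of $\bX$ and $\hat\bX$ at $O(\epsilon)$ cost, and the local Lipschitz property of $f$ on a neighborhood of the orthogonal group, which absorbs the final rounding without further rate loss.
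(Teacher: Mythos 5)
Your proposal is correct and follows essentially the same route as the paper's proof: reduce to the normalized frame $\bX_1+\bX_2+\bX_3=0$ via a block-diagonal orthogonal change of variables, bound the SDP objective by feasibility of the ground-truth solution (yielding $\|\hat\bX\hat\bU\|_F = O(\epsilon)$), invoke Theorem~\ref{thm:stability} to obtain $\|\hat\bU^\top\bP_{\rmL_1^\perp}\|_F = O(\sqrt\epsilon)$, propagate through the constraint $\hat\bU_i\hat\bU_i^\top = \bI$ to get $\hat\bH_{i3} = \bI + O(\sqrt\epsilon)$, and finish by local Lipschitz continuity of the polar rounding. The only cosmetic differences are your slightly sharper operator-norm bound $\|\hat\bU\|_{\mathrm{op}} \le \sqrt{3}$ (the paper uses $\le 3$) and the introduction of the averaged matrix $\bQ_0$ as an intermediary, where the paper directly compares $\hat\bU_1$ to $\hat\bU_3$.
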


The result (a) shows that the SDP method successfully recovers the orthogonal matrices as long as $N\geq D+1$, compared with the stringent requirement $N \geq 2D$ for the na\"{i}ve least squares approach. 
The condition $N\geq D+1$ is nearly optimal. In \eqref{eq:original0}, there are $ND$ constraints and $D(D-1)$ variables. Hence, it is impossible to recover $\bV_1$ and $\bV_2$ when $N<D-1$.

The result (b) shows that the SDP method is stable to noise in the input matrices. We remark that it might be possible to improve the stability analysis: While the current analysis gives an error of $O(\sqrt{\epsilon})$, the empirical performance usually has an error of $O(\epsilon)$, as shown in Table~\ref{tab:noise} of Section~\ref{sec:simu}. 

We also remark that Theorem~\ref{thm:probablistic} can be generalized to the complex case---the proof applies to the case of unitary matrices as well. For the complex case, there are $2ND$ constraints and $2D^2$ degrees of freedom. Therefore, it is impossible to recover $\bV_1$ and $\bV_2$ when $N<D$. Moreover, we suspect that recovery is impossible even for $N=D$, which would suggest that the sufficient condition $N\geq D+1$ in Theorem~\ref{thm:probablistic}(a) is also necessary: in fact, it is easy to verify the impossibility of recovering $\bV_1$ and $\bV_2$ when $N=D=1$.
\subsection{Generalization}
A natural generalization of \eqref{eq:original0} is the following problem: given known matrices $\bX_1, \bX_2, \ldots, \bX_{K-1}\in\reals^{N\times D}$ and unknown orthogonal matrices $\bV_1, \bV_2, \ldots, \bV_{K-1}\in O(D)$, recover $\{\bV_i\}_{i=1}^{K-1}$ from  \begin{equation}\label{eq:original0_generalize}\bX_K=\sum_{i=1}^{K-1}\bX_i\bV_i.\end{equation}

For this generalized problem, the SDP method is formulated as follows. We first homogenize it to
\[
\sum_{i=1}^{K}\bX_i\bV_i=\mathbf{0},
\]
and let $\bH\in \reals^{KD\times KD}$ be a Hermitian matrix with the $(i,j)-$th $D\times D$ block given by $\bH_{ij}=\bV_i\bV_j^\top$. Then the SDP method solves
\begin{equation}\label{eq:original3}\text{min $\tr(\bC\bH)$, subject to $\bH\psdge \mathbf{0}$ and $\bH_{ii}=\bI$ for all $1\leq i\leq K$,}
\end{equation}
where $\bH_{ii}$ represents the $(i,i)-$th $D\times D$ block. Then we extract the orthogonal matrices by the procedure described in Section~\ref{sec:main}.

For this generalized problem and its associated SDP approach, we have the following theoretical guarantee.
\begin{thm}\label{thm:probablistic_generalize}
For generic $\{\bX_i\}_{i=1}^{K-1}\in\mathbb{R}^{N\times D}$, if $N\geq (K-2)D+1$,  then the SDP method recovers $\{\bV_i\}_{i=1}^{K-1}$ exactly.
\end{thm}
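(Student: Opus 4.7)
The plan is to mirror the proof of Theorem~\ref{thm:probablistic}(a), replacing Theorem~\ref{thm:stability} with its $K$-block analog. By the gauge freedom $\bV_i \mapsto \bV_i \bU$, and by reparameterizing $\bX_i \mapsto \bX_i \bV_i^{\mathrm{true}}$ (which preserves genericity), I may assume the ground truth is $\bV_i = \bI$ for all $i$. In this normalization \eqref{eq:original0_generalize} reads $\sum_{i=1}^K \bX_i = 0$, i.e.\ $\bX_K = -\sum_{i=1}^{K-1} \bX_i$, and the trivial $\bU^{\mathrm{true}} = (\bI, \ldots, \bI)^\top \in \reals^{KD \times D}$ is feasible for the SDP~\eqref{eq:original3} with objective value $0$.

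The main step is to establish the following $K$-block analog of Theorem~\ref{thm:stability}: for generic $\bX_1, \ldots, \bX_{K-1} \in \reals^{N \times D}$ with $N \geq (K-2)D + 1$, setting $\bX = (\bX_1, \ldots, \bX_K) \in \reals^{N \times KD}$ and $\rmL_1 = \{(\bv, \ldots, \bv)^\top : \bv \in \reals^D\} \subset \reals^{KD}$, one has
\[
\|\bX \bU\|_F \geq c(\bX_1, \ldots, \bX_{K-1}) \|\bU^\top \bP_{\rmL_1^\perp}\|_F^2
\]
for every $\bU$ in the $K$-block version of $\mathcal{U}$, with $c > 0$ on a dense open set of parameters. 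The critical observation is a dimension count: $\rmL_1 \subseteq \ker(\bX)$ always, and for generic $(\bX_1, \ldots, \bX_{K-1})$ with $N \geq (K-2)D+1$ the block matrix $[\bX_1, \ldots, \bX_{K-1}]$ has rank $\min(N, (K-1)D) = N$, so $\dim \ker(\bX) = KD - N$. For $N = (K-2)D+1$ this yields $\dim \ker(\bX) = 2D-1$ and hence $\dim(\ker(\bX)/\rmL_1) = D-1$—\emph{exactly the same} excess-kernel dimension that appears when $K=3, N=D+1$ in Theorem~\ref{thm:stability}. Consequently, the algebraic argument underlying Theorem~\ref{thm:stability}---which shows that the set of parameters admitting a $\bU \in \mathcal{U}$ with columns in $\ker(\bX) \setminus \rmL_1$ lies in a proper algebraic subvariety---should extend essentially verbatim, with the role of $[\bX_1, \bX_2]$ played by $[\bX_1, \ldots, \bX_{K-1}]$ and the slack variable $\bV_3$ replaced by $\bV_K$.

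Granted the generalized stability bound, exact recovery follows at once. Since $\bU^{\mathrm{true}}$ is feasible with value $0$, the SDP optimum $\bH^*$ satisfies $\tr(\bC \bH^*) = 0$; factoring $\bH^* = \bU \bU^\top$ gives $\|\bX \bU\|_F^2 = \tr(\bC \bH^*) = 0$, so the bound forces $\bU^\top \bP_{\rmL_1^\perp} = 0$, meaning every column of $\bU$ lies in $\rmL_1$. Hence $\bU_1 = \cdots = \bU_K$, every block $\bH^*_{ij} = \bI$, $\rank(\bH^*) = D$, and the rounding procedure returns the true $\bV_i$ (the identity in the gauge-normalized coordinates, and $\bV_i^{\mathrm{true}}$ after undoing the reparameterization).

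The hard part will be verifying that the proof of Theorem~\ref{thm:stability} extends to general $K$ without the appearance of new obstructions. The matching excess-kernel dimension $D-1$ strongly suggests the extension is essentially bookkeeping, but one must check that the orthogonality constraints $\bU_i \bU_i^\top = \bI$ for $i = 1, \ldots, K$ continue to rule out all nontrivial vectors of $\ker(\bX)$ lying outside $\rmL_1$, even though the number of orthogonality constraints grows with $K$ while the excess-kernel dimension remains fixed; the proof must ensure that no genuinely new failure mode can arise from the interaction of the $K-3$ additional blocks with the augmented kernel structure.
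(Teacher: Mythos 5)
Your proposed route is genuinely different from the paper's, and it has a nontrivial gap.

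The paper does not prove Theorem~\ref{thm:probablistic_generalize} by generalizing the stability bound in Theorem~\ref{thm:stability}. Instead it argues directly at the level of exact recovery: it reduces the question to whether a generic $(\max((K-1)D-N,0))$-dimensional subspace $\Sp(\bY)$ can, together with $\rmL_1$, contain the column space of some feasible $\bU$ (property \eqref{eq:perp1}); it then parameterizes all ``bad'' $\bY$'s as the image of a family of smooth maps $g_d$ and counts dimensions to show this image has measure zero via the critical-values theorem~\cite[Theorem 6.8]{lee2003introduction}. This is a purely soft, Sard-type argument: no constant $c(\bX)>0$ is produced and no stability estimate is obtained. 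Your route — proving a $K$-block analogue of the quantitative inequality $\|\bX\bU\|_F \geq c\,\|\bU^\top\bP_{\rmL_1^\perp}\|_F^2$ — is more ambitious, and would give a noise-stability result for general $K$ (which the paper does not state and which Theorem~\ref{thm:probablistic}(b) gives only for $K=3$). So the two approaches are not the same, and yours would be a genuine strengthening if completed.

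The gap is that you do not actually establish the $K$-block stability bound; you only observe that the excess-kernel dimension is again $D-1$ and assert the argument ``should extend essentially verbatim.'' The matching dimension count is a good sanity check, but it does not by itself carry the proof. The hard technical content of Theorem~\ref{thm:stability} lives in the representation \eqref{eq:representation} of $\tilde{\bU}^*$ in terms of $\bL$, $\bK$, and the kernel matrices $\bY_1,\bY_2$, and in Lemma~\ref{lemma:pertubation}, whose statement and inductive proof are specifically phrased for two matrices $\bY,\bZ$ and exploit that two constraints $\lambda_{\min}(\bL\bY_i+\bY_i^\top\bL^\top)\geq -\epsilon$ together pin down $\bL$. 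For general $K$ you would get $K-1$ kernel blocks $\bY_1,\dots,\bY_{K-1}$ and $K-1$ such constraints; applying the lemma to a chosen pair is plausible, but you must verify that the genericity of $(\bY_1,\dots,\bY_{K-1})$ induced by generic $\bX_i$'s makes that pair generic in the sense Lemma~\ref{lemma:pertubation} requires, and you must rewrite and rebound the block representation, the $\|\bK\bY_i\|_F$ estimates, and the distance inequalities for $K$ blocks. None of this is done, and you explicitly flag it as ``the hard part.'' As it stands the argument is a plan, not a proof; the paper's route avoids all of this by never attempting a quantitative bound.
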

Theorem~\ref{thm:probablistic}(a) can be considered as a special case of Theorem~\ref{thm:probablistic_generalize} when $K=3$. However, for $K>3$, the condition $N\geq (K-2)D+1$ is not close to optimal. Since \eqref{eq:original0_generalize} has $ND$ constraints and $D(D-1)(K-1)/2$ variables, the information-theoretic limit is $N=(D-1)(K-1)/2$. Simulations in Section~\ref{sec:simu} also show that the SDP approach empirically recovers the orthogonal matrices even when $N$ is smaller than $(K-2)D+1$. However, the theoretical guarantee in Theorem~\ref{thm:probablistic_generalize} is still more powerful than the least squares approach of solving $\min_{\{\bV_i\}_{i=1}^{K-1}\in \reals^{D\times D}}\|\sum_{i=1}^{K-1}\bX_i\bV_i-\bX_K\|_F^2$, which requires $N\geq (K-1)D+1$ to recover $\{\bV_i\}_{i=1}^{K-1}$.

\section{Numerical Experiments}\label{sec:simu}
In this section, we compare several methods for solving \eqref{eq:original0} and \eqref{eq:original0_generalize} on artificial data sets. The data sets are generated as follows: $\{\bX_i\}_{i=1}^{K-1}$ are random matrices with i.i.d standard Gaussian entries $\mathcal{N}(0,1)$, $\{\bV_i\}_{i=1}^{K-1}$ are random orthogonal matrices  (according to Haar measure) generated by QR decomposition of random matrices with i.i.d standard Gaussian entries, and $\bX_K$ is generated by  \eqref{eq:original0_generalize}.

We compare the following five methods:
\begin{enumerate}
\item The SDP relaxation approach (SDP) described in Section~\ref{sec:main}.
\item The na\"{i}ve least squares approach (LS):
\[\min_{\{\bV_i\}_{i=1}^{K-1}}\|\bX_K-\sum_{i=1}^{K-1}\bX_i\bV_i\|_F^2\]
\item Since the convex hull of the set of orthogonal matrices is the set of matrices with operator norm not greater than one, we can strengthen the LS approach by constraining its domain (C-LS):
\[\min_{\{\bV_i\}_{i=1}^{K-1}}\|\bX_K-\sum_{i=1}^{K-1}\bX_i\bV_i\|_F^2,\,\,\text{subject to $\|\bV_i\|\leq 1$, $1\leq i\leq K-1$}\]
\item This is an approach suggested to us by Afonso Bandeira. Let us start with the case $K=3$. If $\bV_3=\bV_1\bV_2^\top$, then from \eqref{eq:original0},
$\bX_3\bV_2^\top=\bX_1\bV_3+\bX_2$ and $\bX_3\bV_1^\top=\bX_1+\bX_2\bV_3^\top$. Then we solve the expanded least squares problem based on these three equations (LS+):
\[\min_{\bV_1,\bV_2,\bV_3}\|\bX_3-\bX_1\bV_1-\bX_2\bV_2\|_F^2+
\|\bX_3\bV_2^\top-\bX_1\bV_3-\bX_2\|_F^2+\|
\bX_3\bV_1^\top-\bX_1-\bX_2\bV_3^\top\|_F^2.
\]
Defining
\[
\bH=\left( \begin{array}{ccc}
\bI & \bV_3  & -\bV_1 \\
\bV_3^\top & \bI & -\bV_2 \\
 -\bV_1^\top & -\bV_2^\top  & \bI \end{array} \right),
\]
the optimization problem can be rewritten as
\[
\min_{\bH\in\mathbb{R}^{3D\times 3D}}\|(\bX_1,\bX_2,\bX_3)\bH\|_F^2, \,\,\,\text{subject to $\bH=\bH^\top$, $\bH_{ii}=\bI$}.
\]
In general, for $K\geq 3$, this method can be formulated as \[\text{$\min_{\bH\in\mathbb{R}^{KD\times KD}}{\tr(\bC\bH^2)}$, subject to $\bH=\bH^\top$ and $\bH_{ii}=\bI$ for all $1\leq i\leq K$,}\]
where $\bH_{ij}$ represents the $ij$-th $D\times D$ block of $\bH$.
\item The LS+ approach with constraints on the operator norm of $\bH_{ij}$ (C-LS+):
\[\text{$\min_{\bH\in\mathbb{R}^{KD\times KD}}{\tr(\bC\bH^2)}$, subject to $\bH=\bH^\top$, $\bH_{ii}=\bI$ and $\|\bH_{ij}\|\leq 1$ for all $1\leq i,j\leq K$.}\]

\end{enumerate}

To compare the SDP/LS+/C-LS+ approaches, we summarize their objective functions and their constraints in  Table~\ref{tab:compare}. There are two main differences. First, the objective functions are different. However, since $\tr(\bC\bH)=0$ if and only if $\tr(\bC\bH^2)=0$ (considering $\bC\psdge \mathbf{0}$ and $\bH\psdge \mathbf{0}$), this difference does not affect the property of exact recovery. Second, the constraints of the SDP approach are more restrictive than those of the C-LS+ approach ($\bH_{ii}=\bH_{jj}=\bI$ and $\bH\psdge \mathbf{0}$ imply $\|\bH_{ij}\|\leq 1$), which is more restrictive than the C-LS approach. This observation partially justifies the fact that SDP performs better than C-LS+, and C-LS+ performs better than C-LS. However, these interpretations do not justify the empirical finding in Figures~\ref{fig:DD} and~\ref{fig:generalize} that C-LS+ and SDP behave very similarly in the absence of noise. We leave the explanation of this observation as an open question.

\begin{table*}[htbp]
\centering
\caption{\small {Comparison between SDP, LS+ and C-LS+ approaches. \label{tab:compare}}}
\begin{tabular}{|c|c|c|c|}
\hline
 & objective function  & common constraint & other constraints \\
 \hline
SDP&$\tr(\bC\bH)$& &$\bH\psdge\mathbf{0}$\\
LS+&$\tr(\bC\bH^2)$& $\bH_{ii}=\bI$, $\bH=\bH^\top$&\\
C-LS+&$\tr(\bC\bH^2)$& &$\|\bH_{ij}\|\leq 1$\\
\hline
\end{tabular}
\end{table*}

Among these optimization approaches, the LS method has an explicit solution by decomposing it into $D$ sub-problems, where each is a regression
problem that estimates $KD$ regression parameters. 
All other methods are convex and can be solved by CVX~\cite{cvx}, where the default solver SeDuMi is used~\cite{SDPT2003}. While the LS+ approach can also be written as a least squares problem with an explicit solution, this problem is not decomposable (unlike the LS method).

When the solution matrices of LS/C-LS are not orthogonal, they are rounded to the nearest orthogonal matrices using the approach in~\cite{Keller1975}. The rounding procedure of LS+/C-LS+ is the same as that of the SDP method.

In the first simulation, we aim to find the size of $N$ such that the orthogonal matrices be exactly recovered by the suggested algorithms for $K=3$. We let $D=10$ or $20$ and choose various values for $N$, and record the mean recovery error of $\bV_1$ (in Frobenius norm) over 50 repeated simulations in Figure~\ref{fig:DD}. The performance
of LS verifies our theoretical analysis: it recovers the orthogonal matrices for $N\geq 2D$. LS fails when $N<2D$ because the null space of $[\bX_1,\bX_2]$ is nontrivial and there are infinite solutions. Besides, LS+ succeeds when  $N\geq 3D/2$. SDP and C-LS+ are the best approaches and they succeed when $N\geq D+1$, which verifies Theorem~\ref{thm:probablistic}. 

In the second simulation, we test the stability of the suggested algorithms when $K=3$ and the measurement matrix $\bX_3$ is contaminated elementwisely by Gaussian noise $\mathcal{N}(0,\sigma^2)$. We use the setting $N=12,16,22$, $D=10$ and $\sigma=0.01$ or $0.1$ and record the mean recovery error over 50 runs in Table~\ref{tab:noise}, which shows that the SDP relaxation approach is more stable to noise than competing approaches. This motivates our interest in studying the SDP approach.

\begin{figure}
\centering\includegraphics[width=0.48\textwidth]{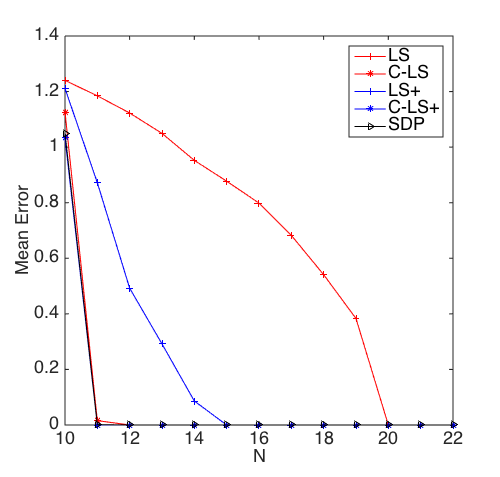}\includegraphics[width=0.48\textwidth]{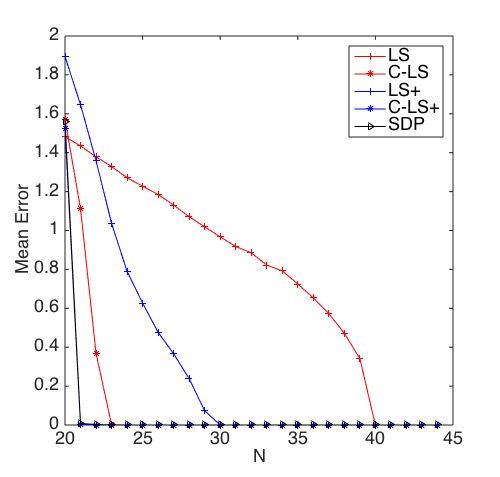}
\caption{The dependence of the mean recovery error (over $50$ runs) with respect to $N$, when $D=10$ (left panel) and $D=20$ (right panel). The $y$-axis represents the mean recovery error of $\bV_1$ in Frobenius norm.}\label{fig:DD}
\end{figure}

\begin{table*}[htbp]
\centering
\caption{\small {The mean recovery error over $50$ runs in the noisy setting for $K=3$ and $D=10$. \label{tab:noise}}}
\begin{tabular}{|c|c|c|c|c|c|c|}
\hline
$N$ & $\sigma$  & SDP&C-LS+ & LS+ & C-LS& LS \\
\hline
12& 0.01 &   \textbf{0.071}  &  0.076 &   0.482  &  0.508&    2.260 \\
16& 0.01 &    \textbf{0.026}  &  0.031    & 0.037 &   0.059  &  1.926 \\
22& 0.01 &    \textbf{0.018}  &  0.021  &  0.020  &  0.030 &   0.077 \\
\hline
12& 0.1 &    \textbf{0.742}  &  \textbf{0.742}  &  1.088  &  0.880  &  2.341 \\
16& 0.1 &   \textbf{ 0.261}  &  0.328  &  0.399  &  0.459  &  2.034 \\
22& 0.1 &    \textbf{0.175}  &  0.217  &  0.216  &  0.262  &  0.834 \\
\hline
\end{tabular}
\end{table*}




In the third simulation, we compare these methods for $K=5$ and $D=5,10$. The results are shown in Figure~\ref{fig:generalize}. This simulation verifies Theorem~\ref{thm:probablistic_generalize} by showing that the SDP approach successfully recovers the orthogonal matrices for $N\geq (K-2)D+1$. Indeed, the empirical performance of the SDP approach is even better: it recovers $\{\bV_i\}_{i=1}^5$ at $N=12$ and $25$ respectively, which are smaller than $(K-2)D+1$. Compared with LS/LS+/C-LS, the SDP and C-LS+ approaches recover the orthogonal matrices with smaller $N$.
\begin{figure}\label{fig:generalize}
\centering\includegraphics[width=0.48\textwidth]{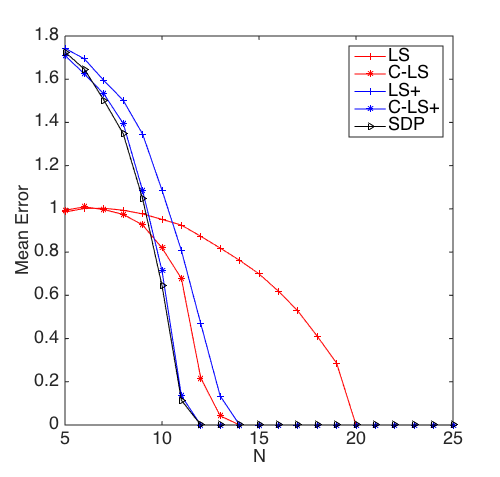}\includegraphics[width=0.48\textwidth]{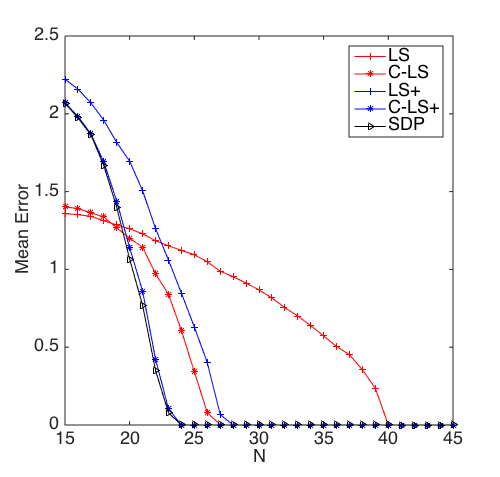}
\caption{The dependence of the mean recovery error (over $50$ runs) with respect to $N$, when $K=5$, $D=5$ (left panel) and $K=5$, $D=10$ (right panel). The $y$-axis represents the mean recovery error of $\bV_1$ in Frobenius norm.}
\end{figure}

At last, we record the running time for all  approaches in Table~\ref{tab:time}. Although the running time is not the main focus of this paper, and CVX is not optimized for the approaches, this table gives  a sense of the running times. Table~\ref{tab:time} clearly shows that the LS approach is much faster than the other approaches, and the SDP approach is consistently faster than C-LS+. We suspect that it is due to the fact that SDP has fewer constraints, even though the constraint of SDP is more restrictive than that of C-LS+.

\begin{table*}[htbp]
\centering
\caption{\small {The average running time (in seconds) when $\sigma=0.1$, $K=3$, $D=10$, $N=15$ (first row) and $\sigma=0.1$, $K=5$, $D=10$, $N=35$ (second row). \label{tab:time}}}
\begin{tabular}{|c|c|c|c|c|}
\hline
 SDP&C-LS+ & LS+ & C-LS& LS \\
\hline
0.64  &  0.76   &  0.20 &  0.48 & 0.0005   \\
5.33  &  6.58   &  0.62  & 0.93 & 0.0017\\
\hline
\end{tabular}
\end{table*}

\section{Proofs of main results}\label{sec:proof}
In this section, we first provide the proof for Theorem~\ref{thm:probablistic}, assuming Theorem~\ref{thm:stability}, and then  provide the proofs for Theorems~\ref{thm:stability} and~\ref{thm:probablistic_generalize}. The main reason for this organization is that, given Theorem~\ref{thm:stability} (whose proof is more technical), the proof of Theorem~\ref{thm:probablistic} is rather straightforward. This organization would also emphasize the importance of Theorem~\ref{thm:stability}, which plays an important role in the proof of Theorems~\ref{thm:probablistic}.
\subsection{Proof of Theorem~\ref{thm:probablistic}}
Part (a) follows from the result in part (b) with $\epsilon = 0$, so it is sufficient to prove part (b).

In the proof of part (b), we first claim that it is sufficient to prove the case $\bV_1=\bV_2=-\bI$, i.e.,  when $\bX_1+\bX_2+\bX_3=\mathbf{0}$ and $\|\hat{\bX}_i-\bX_i\|_F\leq \epsilon$ for $1\leq i\leq 3$, then the SDP method recovers $\hat{\bV}_1$ and $\hat{\bV}_2$ such that $\|\hat{\bV}_i+\bI\|\leq C\sqrt{\epsilon}$ for $i=1,2$.

This result implies that if the input of the SDP method is $(-\hat{\bX}_1\bV_1, -\hat{\bX}_2\bV_2, \hat{\bX}_3)$ and the output is denoted by $(\tilde{\bV}_1, \tilde{\bV}_2)$, then $\|\tilde{\bV}_i+\bI\|_F\leq C\sqrt{\epsilon}$ for $i=1,2$.

Additionally, it can be verified that if the SDP method outputs $(\bV_1,\bV_2)$ when the input is $(\bX_1, \bX_2, \bX_3)$, $\bU_1$ and $\bU_2$ are two orthogonal matrices, then the output for $(\bX_1\bU_1, \bX_2\bU_2, \bX_3)$ would be $(\bU_1^\top\bV_1, \bU_2^\top\bV_2)$. Applying this observation, we have $\tilde{\bV}_i=-\hat{\bV}_i\bV_i^\top$. Combining it with  $\|\tilde{\bV}_i+\bI\|_F\leq C\sqrt{\epsilon}$ for $i=1,2$, the theorem is proved.



The rest of the proof will assume $\bV_1=\bV_2=-\bI$ and $\bX_1+\bX_2+\bX_3=\mathbf{0}$. We represent the noisy setting in \eqref{eq:original2} by $\hat{\bX}$, $\hat{\bC}$ and $\hat{\bH}$, the clean setting by $\bX$, $\bC$ and $\bH$, and write the decomposition of $\bH$ and $\hat{\bH}$ by $\bH=\bU\bU^\top$ and $\hat{\bH}=\hat{\bU}\hat{\bU}^\top$.

Since $\hat{\bU}_i\hat{\bU}_i^\top=\bI$, $\|\hat{\bU}\|\leq\sum_{i=1}^3\|\hat{\bU}_i\|=3$, \eqref{eq:frobenius} implies
\begin{equation}\label{eq:stability1}
\|\bX\hat{\bU}\|_F-\|\hat{\bX}\hat{\bU}\|_F\leq \|(\bX-\hat{\bX})\hat{\bU}\|_F
\leq 3\|\bX-\hat{\bX}\|_F\leq 9\epsilon
\end{equation}
and following the same argument,
\begin{equation}\label{eq:stability3}
\|\hat{\bX}\bU\|_F\leq 9\epsilon+\|{\bX}\bU\|_F.
\end{equation}
Since $\hat{\bH}=\hat{\bU}\hat{\bU}^\top$ is the minimizer of the SDP problem, we have $\|\hat{\bX}\hat{\bU}\|_F\leq \|\hat{\bX}\bU\|_F$. Combining it with $\|{\bX}\bU\|_F=0$ and \eqref{eq:stability3},
\begin{equation}\label{eq:stability0}
\|\hat{\bX}\hat{\bU}\|_F\leq 9\epsilon.
\end{equation}
Combining \eqref{eq:stability1}, \eqref{eq:stability0} with Theorem~\ref{thm:stability},
\begin{equation}\label{eq:stability2}
9\epsilon\geq \|\hat{\bX}\hat{\bU}\|_F\geq \|{\bX}\hat{\bU}\|_F-9\epsilon
\geq
c(\bX)\|\hat{\bU}^\top\bP_{\rmL_1^\perp}\|_F^2-9\epsilon,\end{equation}
so we have
\begin{equation}\label{eq:stability4}
\|\hat{\bU}^\top\bP_{\rmL_1^\perp}\|_F\leq \sqrt{\frac{18\epsilon}{c(\bX)}}.
\end{equation}
Combining it with Lemma~\ref{lemma:frobenius},
\begin{align*}
\|\hat{\bU}_1\hat{\bU}_3^\top-\bI\|_F=&\|\hat{\bU}_1-\hat{\bU}_3\|_F=
\|\hat{\bU}^\top(\bI,\mathbf{0},-\bI)\|_F
=
\|\hat{\bU}^\top\bP_{\rmL_1^\perp}\bP_{\rmL_1^\perp}^\top(\bI,\mathbf{0},-\bI)\|_F\\
\leq &\|\hat{\bU}^\top\bP_{\rmL_1^\perp}\|_F \|\bP_{\rmL_1^\perp}^\top(\bI,\mathbf{0},-\bI)\|
\leq 2\sqrt{\frac{18\epsilon}{c(\bX)}}.
\end{align*}

Since the post-processing step $f(\bZ)$ is a continuous and differentiable function with respect to $\bZ$ and $f(-\bI)=-\bI$, the difference between the recovered orthogonal matrix $\hat{\bV}_1$ and $\bV_1=-\bI$, $\|\hat{\bV}_1+\bI\|_F$, is bounded above by $C\sqrt{\epsilon}$ when $c(\bX)> 0$. Similarly we have $\|\hat{\bV}_2+\bI\|_F<C\sqrt{\epsilon}$.


\subsection{Proof of Theorem~\ref{thm:stability}}
\begin{proof}
The main idea of the proof is to investigate $\tilde{\bU}^*$, which is defined to be nearest matrix to $\bU$ in the set $\tilde{\mathcal{U}}$ defined in \eqref{eq:tildeU}. Then we represent $\tilde{\bU}^*$ in the form of \eqref{eq:representation}, and show that $\|\bL\|_F$ is bounded by $C_1 \|\bX\bU\|_F$, for some $C_1>0$. With additional bounds $\|\bK\bY_1\|_F, \|\bK\bY_2\|_F\leq C_2\sqrt{ \|\bX\bU\|_F}$ and \[
\|\tilde{\bU}^{*\top}\bP_{\rmL_1^\perp}\|_F\leq \left\|\left( \begin{array}{ccc}
(\bL\bY_1)^\top & (\bL\bY_2)^\top & \mathbf{0} \\
(\bK\bY_1)^\top & (\bK\bY_2)^\top & \mathbf{0} \end{array}\right)\right\|_F,
\]
we will show that $\|\tilde{\bU}^{*\top}\bP_{\rmL_1^\perp}\|_F$ is bounded above by a function of $\|\bX\bU\|_F$ in \eqref{eq:key}. By analyzing  the properties of $\tilde{\bU}^*$, the same statement holds for $\|\bU\bP_{\rmL_1^\perp}\|_F$, and the theorem is proved.

We first remark that it is sufficient to prove the case $N=D+1$. If this is true, then for $N>D+1$, \eqref{eq:stability} holds when $\bX$ is replaced by $\bX'$, the submatrix consists of the first $D+1$ rows of $\bX$. Since $\|\bX\bU\|_F\geq \|\bX'\bU\|_F$, \eqref{eq:stability} is proved. Therefore, for the rest of the proof, we assume $N=D+1$.

We first define the following:\begin{align}\nonumber
\bar{\mathcal{U}}&=\{\bar{\bU}\in\reals^{3D\times k}: \bX\bar{\bU}=\mathbf{0}\},
\\
\tilde{\mathcal{U}}&=\{\tilde{\bU}\in\reals^{3D\times k}: \bX\tilde{\bU}=\mathbf{0}, \tilde{\bU}_3=\bU_3\},\label{eq:tildeU}
\end{align}
and in \eqref{eq:tildeU}, $\tilde{\bU}_3$ and $\bU_3$ represent the submatrix consists of the last $D$ rows of $\tilde{\bU}$ and $\bU$. We also define the distances between two matrices and the distances between a matrix and a set by
\[
\dist(\bU,\bU')=\|\bU-\bU'\|_F,\,\,\,\,\dist(\bU,\mathcal{U})=\min_{\bU'\in\mathcal{U}}\|\bU-\bU'\|_F.
\]
Then we have
\begin{equation}\label{eq:bartilde}
\dist(\bU,\bar{\mathcal{U}})\geq C\dist(\bU,\tilde{\mathcal{U}}),
\end{equation}
for $C=1/2$. The proof of \eqref{eq:bartilde} is deferred to Section~\ref{sec:bartilde}.

Assuming that $\sigma_{\min}(\bX)$ is the smallest singular value of $\bX$, and for any matrix $\bA\in\mathbb{R}^{m\times n}$, $\Sp(\bA)$ represents the subspace spanned by the row vectors of $\bA$ in $\reals^n$, then we have
\begin{align}\label{eq:dist_constrol1}
&\|\bX\bU\|_F\geq \sigma_{\min}(\bX)\|\bP_{\Sp(\bX)}^\top\bU\|_F = \sigma_{\min}(\bX) \dist(\bU,\bar{\mathcal{U}})\geq C\sigma_{\min}(\bX) \dist(\bU,\tilde{\mathcal{U}}),
\end{align}
where the last inequality follows from \eqref{eq:bartilde}.

Assuming that $\tilde{\bU}^*=\argmin_{\tilde{\bU}\in\tilde{\mathcal{U}}}\dist(\tilde{\bU},\bU)$, then using $\left(\begin{array}{c}
\bU_3 \\
\bU_3 \\
 \bU_3 \end{array} \right)\in \tilde{\mathcal{U}}$ we have
\begin{equation}\label{eq:result1}
\dist(\tilde{\bU}^*,\bU)\leq \dist\left(\left(\begin{array}{c}
\bU_3 \\
\bU_3 \\
 \bU_3 \end{array} \right),\bU\right)\leq \sqrt{8D}.
\end{equation}

Now let us investigate $\dist(\tilde{\bU}^*,\mathcal{U})$ further. If $\bY_1$ and $\bY_2\in\reals^{(2D-N)\times D}$ are chosen such that $[\bX_1,\bX_2][\bY_1,\bY_2]^\top=\mathbf{0}$, then using $\bX_1+\bX_2+\bX_3=\mathbf{0}$, there exist $\bL\in\reals^{D\times (3D-N)}$, $\bK\in\reals^{(k-D)\times (3D-N)}$ and an orthogonal matrix $\bU_3'\in\reals^{k\times k}$ such that
\begin{equation}
\tilde{\bU}^*=\left( \begin{array}{ccc}
\bI+\bL\bY_1 & \bK\bY_1\\
\bI+\bL\bY_2 &  \bK\bY_2  \\
 \bI& \mathbf{0} \end{array} \right)\bU_3'.\label{eq:representation}
\end{equation}
 That is, if we write $\tilde{\bU}^*=\left(\begin{array}{c}
\tilde{\bU}^*_1 \\
\tilde{\bU}^*_2 \\
 \tilde{\bU}^*_3 \end{array} \right)$, then
\[
\tilde{\bU}^*_1=\left( \bI+\bL\bY_1, \bK\bY_1 \right)\bU_3',\,\,\,
\tilde{\bU}^*_2=\left( \bI+\bL\bY_2, \bK\bY_2 \right)\bU_3',\,\,\,
\tilde{\bU}^*_3=\bU_3
\]
($\tilde{\bU}^*_3=\bU_3$ follows from definition of $\tilde{\mathcal{U}}$). Since for any $\bU_1\in\reals^{k\times D}$ with SVD decomposition $\bU_1=\bU_{\bU_1}\bSigma_{\bU_1}\bV_{\bU_1}^\top$, the closest orthogonal matrix in $\reals^{k\times D}$ is given by $\bU_{\bU_1}\bV_{\bU_1}^\top$, so  the distance between $\tilde{\bU}^*$ and $\mathcal{U}$ is
\[
 \dist(\tilde{\bU}^*,\mathcal{U})=\sqrt{\sum_{i=1}^3\|\bSigma_{\tilde{\bU}^*_i}-\bI\|_F^2}.
\]
Applying \eqref{eq:result1}, all singular values of $\tilde{\bU}_1$, $\tilde{\bU}_2$ and $\tilde{\bU}_3$ (i.e., all diagonal entries of $\{\bSigma_{\tilde{\bU}^*_i}\}_{i=1}^3$) are smaller than $\sqrt{8D}+1$.  Let $C'=\sqrt{8D}+2$, then  $\dist(\bU,\mathcal{U})$
can be controlled as follows:
\begin{align}\nonumber
&C'^2\dist^2(\bU,\tilde{\mathcal{\bU}})=C'^2\dist^2(\tilde{\bU}^*,\bU)\geq C'^2\dist^2(\tilde{\bU}^*,\mathcal{U})
=C'^2\sum_{i=1}^3\|\bSigma_{\tilde{\bU}^*_i}-\bI\|_F^2\\\nonumber\geq& \sum_{i=1}^3\|\bSigma_{\tilde{\bU}^*_i}^2-\bI\|_F^2
={\|\tilde{\bU}^*_1\tilde{\bU}^{*\top}_1-\bI\|_F^2+\|\tilde{\bU}^*_2\tilde{\bU}^{*\top}_2-\bI\|_F^2+\|\tilde{\bU}^*_3\tilde{\bU}^{*\top}_3-\bI\|_F^2}
\\=&\|\bL\bY_1+\bY_1^\top\bL^\top-\bY_1^\top(\bL^\top\bL+\bK^\top\bK)\bY_1\|_F^2\nonumber
\\&+\|\bL\bY_2+\bY_2^\top\bL^\top-\bY_2^\top(\bL^\top\bL+\bK^\top\bK)\bY_2\|_F^2
\label{eq:dist_constrol3}\\\geq &(\min(0,\lambda_{\min}(\bL\bY_1+\bY_1^\top\bL^\top)))^2+
(\min(0,\lambda_{\min}(\bL\bY_2+\bY_2^\top\bL^\top)))^2,\label{eq:dist_constrol2}
\end{align}
where the last inequality follows from the observation that $\bY_1^\top(\bL^\top\bL+\bK^\top\bK)\bY_1$ and $\bY_2^\top(\bL^\top\bL+\bK^\top\bK)\bY_2$ are positive semidefinite, and for any symmetric matrix $\bX$, the distance to the nearest positive semidefinite matrix (in Forbenius norm) is at least $-\min(0,\lambda_{\min}(\bX))$.

Then \eqref{eq:dist_constrol1} and \eqref{eq:dist_constrol2} imply
\[
\lambda_{\min}(\bL\bY_1+\bY_1^\top\bL^\top)\geq - \frac{C'\|\bX\bU\|_F}{C\sigma_{\min}(\bX)},\,\,\,
\lambda_{\min}(\bL\bY_2+\bY_2^\top\bL^\top)\geq - \frac{C'\|\bX\bU\|_F}{C\sigma_{\min}(\bX)}.
\]
Now we introduce an important lemma.
\begin{lemma}\label{lemma:pertubation}For $D>M$,  $\bL\in\reals^{D\times M}$ and $\bY, \bZ\in\reals^{M\times D}$, if $\lambda_{\min}(\bL\bY+\bY^\top\bL^\top)\geq - \epsilon$ and $\lambda_{\min}(\bL\bZ+\bZ^\top\bL^\top)\geq -\epsilon$, where $\lambda_{\min}$ represents the smallest eigenvalue, then $\|\bL\|_F\leq \epsilon c(\bY,\bZ)$, where $c(\bY,\bZ)>0$ for generic $\bY$ and $\bZ$.\end{lemma}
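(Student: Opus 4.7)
The plan is to prove the lemma in two stages: a structural argument that $\bL = \mathbf{0}$ is the only solution at $\epsilon = 0$ for generic $\bY, \bZ$, followed by a compactness-based upgrade to the quantitative linear bound.

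The central observation is a containment forced by positive semidefiniteness: if $\bY \in \reals^{M \times D}$ has full row rank $M$ and $\bL\bY + \bY^\top\bL^\top \succeq \mathbf{0}$, then $\im \bL \subseteq \range \bY^\top$. Indeed, for any $\bv \in \ker \bY$ the quadratic form $\bv^\top(\bL\bY + \bY^\top\bL^\top)\bv = 2\bv^\top\bL(\bY\bv) = 0$, so positive semidefiniteness forces $(\bL\bY + \bY^\top\bL^\top)\bv = \bY^\top\bL^\top\bv = 0$; since $\bY^\top$ is injective, $\bL^\top\bv = 0$, giving $\ker \bY \subseteq \ker \bL^\top$.

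Now suppose $\bL \neq \mathbf{0}$ satisfies both PSD conditions with $\epsilon = 0$, and let $R = \rank \bL \geq 1$. Factor $\bL = \bP\bQ^\top$ with $\bP \in \reals^{D \times R}, \bQ \in \reals^{M \times R}$ both of full column rank. The PSD hypotheses rewrite as $\bP\bY' + \bY'^\top\bP^\top \succeq \mathbf{0}$ and $\bP\bZ' + \bZ'^\top\bP^\top \succeq \mathbf{0}$, where $\bY' = \bQ^\top\bY, \bZ' = \bQ^\top\bZ \in \reals^{R \times D}$ both have full row rank $R$ for generic $\bY, \bZ$. Applying the central observation separately with $\bY'$ and $\bZ'$,
\[
\im \bP \;\subseteq\; \range \bY'^\top \cap \range \bZ'^\top \;=\; (\bY^\top \im \bQ) \cap (\bZ^\top \im \bQ).
\]
For generic $\bY, \bZ$ and any $R$-dimensional subspace $\mathcal{Q} \subset \reals^M$, the $R$-dim subspaces $\bY^\top\mathcal{Q}, \bZ^\top\mathcal{Q}$ of $\reals^D$ are in general position, so their intersection has dimension $\max(0, 2R - D)$. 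Since $\im \bP$ has dimension $R$, this forces $R \leq \max(0, 2R - D)$, i.e., $R = 0$ or $R \geq D$; the hypothesis $R \leq M < D$ rules out the latter, giving the desired contradiction.

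To finish, define the continuous, positively-homogeneous function $\psi(\bL) := \max\{-\lambda_{\min}(\bL\bY + \bY^\top\bL^\top),\, -\lambda_{\min}(\bL\bZ + \bZ^\top\bL^\top)\}$ on $\reals^{D \times M}$. The previous paragraph yields $\psi(\bL) > 0$ whenever $\bL \neq \mathbf{0}$, so by compactness $\psi$ attains a positive minimum $c_0 = c_0(\bY, \bZ)$ on the unit Frobenius sphere and $\psi(\bL) \geq c_0\|\bL\|_F$; the two hypotheses together give $\psi(\bL) \leq \epsilon$, hence $\|\bL\|_F \leq \epsilon/c_0$. The main technical issue is the general-position claim: the subspace $\mathcal{Q} = \im \bQ$ is determined by $\bL$ rather than being independent of $\bY, \bZ$, so a pointwise genericity argument is insufficient, and one must either run an incidence-variety dimension count on $\{(\bY, \bZ, \mathcal{Q}) : \dim(\bY^\top\mathcal{Q} \cap \bZ^\top\mathcal{Q}) > \max(0, 2R - D)\}$ or exhibit one explicit $(\bY^\ast, \bZ^\ast)$ at which the only solution is $\bL = \mathbf{0}$ and invoke openness of SDP infeasibility under perturbation.
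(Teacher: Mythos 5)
Your route is genuinely different from the paper's. The paper's proof normalizes $\bY$ to a block identity via congruence and runs an induction on the integer $k$ with $k(D-M)<M\leq(k+1)(D-M)$, extracting explicit block eigenvalue estimates through Lemma~\ref{lemma:eigenvalue}. Your plan instead proves a qualitative statement at $\epsilon=0$ and upgrades it to the linear bound by homogeneity and compactness. The central observation --- that $\bL\bY+\bY^\top\bL^\top\psdge\mathbf{0}$ forces $\ker\bY\subseteq\ker\bL^\top$, hence $\im\bL\subseteq\range\bY^\top$ --- is correct, as is the factorization $\bL=\bP\bQ^\top$ reducing to $\im\bP\subseteq(\bY^\top\im\bQ)\cap(\bZ^\top\im\bQ)$ and the compactness step. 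If completed, this is shorter and more conceptual than the paper's block induction, at the price of a non-constructive constant.

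However, there is a genuine gap, and it is exactly the one you flag. What you actually need is only the weaker statement: for generic $(\bY,\bZ)$, every $R$-dimensional subspace $\mathcal{Q}\subseteq\reals^M$ with $1\leq R\leq M$ satisfies $\bY^\top\mathcal{Q}\neq\bZ^\top\mathcal{Q}$. (Your stronger claim, that the intersection has dimension exactly $\max(0,2R-D)$ for all $\mathcal{Q}$, is more than needed and is not obviously true as a universally quantified statement.) Fixed-$\mathcal{Q}$ genericity indeed does not suffice because $\mathcal{Q}=\im\bQ$ depends on $\bL$. Of your two suggested remedies, the incidence-variety count works and is quick: for fixed $R$, the fiber of $\Sigma_R=\{(\bY,\bZ,\mathcal{Q}):\bY^\top\mathcal{Q}=\bZ^\top\mathcal{Q}\}$ over a point of the Grassmannian $\mathrm{Gr}(R,M)$ has codimension $R(D-R)$ in $\reals^{M\times D}\times\reals^{M\times D}$, so $\dim\Sigma_R\leq 2MD+R(M-R)-R(D-R)=2MD-R(D-M)<2MD$; the semialgebraic projection onto the $(\bY,\bZ)$ factor therefore has measure zero and nowhere-dense closure, and one unions over $R=1,\dots,M$. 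Your alternative --- exhibit one explicit good pair and invoke openness --- gives an open set but not density, hence not genericity on its own. So the architecture is sound, but the genericity claim must be filled in along the incidence-variety lines before the proof is complete.
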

The proof of Lemma~\ref{lemma:pertubation} is rather technical and deferred to Section~\ref{sec:lemmaproof}. Applying Lemma~\ref{lemma:pertubation}, for generic $\bY_1$ and $\bY_2$ (and as a result, for generic $\bX_1$ and $\bX_2$) there exists $C_1$ depending on $\bY_1$ and $\bY_2$ such that $\|\bL\|_F\leq C_1\|\bX\bU\|_F$. In addition, we have
\begin{align*}
&\|\bL\bY_1+\bY_1^\top\bL^\top-\bY_1^\top(\bL^\top\bL+\bK^\top\bK)\bY_1\|_F\\\geq& \|\bY_1^\top\bK^\top\bK\bY_1\|_F-
 \|\bY_1^\top\bL^\top\bL\bY_1\|_F-\|\bL\bY_1+\bY_1^\top\bL^\top\|_F,
\end{align*}
and $ \|\bY_1^\top\bK^\top\bK\bY_1\|_F\geq \frac{1}{\sqrt{D}}\tr(\bY_1^\top\bK^\top\bK\bY_1) =\frac{1}{\sqrt{D}}\|\bK\bY_1\|^2_F$. As a result, there exists $C_2$ depending on $\bY_1, \bY_2, \bX$ such that if $\|\bK\bY_1\|_F>C_2\sqrt{\|\bX\bU\|_F}$ then
\[
\|\bL\bY_1+\bY_1^\top\bL^\top-\bY_1^\top(\bL^\top\bL+\bK^\top\bK)\bY_1\|_F
\geq \frac{C'\|\bX\bU\|_F}{C\sigma_{\min}(\bX)},
\]
which violates \eqref{eq:dist_constrol3} and \eqref{eq:dist_constrol1}.
Therefore, by contradiction we proved $\|\bK\bY_1\|_F\leq C_2\sqrt{\|\bX\bU\|_F}$, and similarly, $\|\bK\bY_2\|_F\leq C_2\sqrt{\|\bX\bU\|_F}$. Combining it with $\|\bL\|_F\leq C_1\|\bX\bU\|_F$,  we have
\begin{align}\nonumber
&\|\tilde{\bU}^{*\top}\bP_{\rmL_1^\perp}\|_F
=\|\tilde{\bU}^{*\top}\bP_{\rmL_1^\perp}\|_F
=\left\|\left( \begin{array}{ccc}
(\bL\bY_1)^\top & (\bL\bY_2)^\top & \mathbf{0} \\
(\bK\bY_1)^\top & (\bK\bY_2)^\top & \mathbf{0} \end{array}\right)\bP_{\rmL_1^\perp}\right\|_F
\\\nonumber\leq& \left\|\left( \begin{array}{ccc}
(\bL\bY_1)^\top & (\bL\bY_2)^\top & \mathbf{0} \\
(\bK\bY_1)^\top & (\bK\bY_2)^\top & \mathbf{0} \end{array}\right)\right\|_F\leq \|\bL\bY_1\|_F+\|\bL\bY_2\|_F+
\|\bK\bY_1\|_F+\|\bK\bY_2\|_F
\nonumber\\\leq& \|\bL\|_F(\|\bY_1\|+\|\bY_2\|)+\|\bK\bY_1\|_F+\|\bK\bY_2\|_F\nonumber\\\leq& (\|\bY_1\|+\|\bY_2\|)C_1\|\bX\bU\|_F+2C_2\sqrt{\|\bX\bU\|_F}.\label{eq:key}
\end{align}
Recall that $C_1$ and $C_2$ only depends on $\bX_1$ and $\bX_2$ ($\bX$, $\bY_1$, and $\bY_2$ are generated from $\bX_1$ and $\bX_2$), combining \eqref{eq:dist_constrol1} and \eqref{eq:key} we have
\begin{align*}
&\|{\bU}^{\top}\bP_{\rmL_1^\perp}\|_F\leq \|\tilde{\bU}^{*\top}\bP_{\rmL_1^\perp}\|_F +\|(\bU-\tilde{\bU}^*)^\top\bP_{\rmL_1^\perp}\|_F\leq \|\tilde{\bU}^{*\top}\bP_{\rmL_1^\perp}\|_F +\|\bU-\tilde{\bU}^*\|_F\\
\leq &(\|\bY_1\|+\|\bY_2\|)C_1\|\bX\bU\|_F+2C_2\sqrt{\|\bX\bU\|_F} + \frac{1}{C\sigma_{\min}(\bX)}\|\bX\bU\|_F.
\end{align*}
Considering that $\|\bU\|\leq 3$ and $\|\bX\bU\|_F\leq \|\bU\|\|\bX\|_F\leq 3\|\bX\|_F$, so if we let $C_4=3\|\bX\|_F$, then
\[
\|{\bU}^{\top}\bP_{\rmL_1^\perp}\|_F\leq\left((\|\bY_1\|+\|\bY_2\|)C_1\sqrt{C_4}+2C_2+\frac{1}{C\sigma_{\min}(\bX)}\sqrt{C_4}\right)\sqrt{\|\bX\bU\|_F},
\]
and Theorem~\ref{thm:stability} is proved.
\end{proof}
\subsubsection{Proof of \eqref{eq:bartilde}}\label{sec:bartilde}
Suppose that $\bar{\bU}^*=\argmin_{\bar{\bU}\in\bar{\mathcal{U}}}\dist(\bar{\bU},\bU)$ and
\[
\bU-\bar{\bU}^*=\left(\begin{array}{c}
\bV_1 \\
\bV_2 \\
 \bV_3 \end{array} \right),\,\,\,\text{where $\bV_i\in\reals^{D\times k}$ for $i=1,2,3$},
\]
then
\[
\tilde{\bU}=\bar{\bU}^*+\left(\begin{array}{c}
 \bV_3\\
 \bV_3\\
 \bV_3 \end{array} \right)
\]
would satisfies that $\tilde{\bU}\in\tilde{\mathcal{U}}$, and as a result,
\begin{align*}
\dist(\bU,\bar{\mathcal{U}})=&\dist(\bU,\bar{\bU}^*)=\sqrt{\|\bV_1\|_F^2+\|\bV_2\|_F^2+\|\bV_3\|_F^2} \geq C\sqrt{\|\bV_1+\bV_3\|_F^2+\|\bV_2+\bV_3\|_F^2}\\=&C \dist(\bU,\tilde{\bU})\geq C\dist(\bU,\tilde{\mathcal{U}}),
\end{align*}
where $C$ can chosen to be $1/2$.

\subsection{Proof of Theorem~\ref{thm:probablistic_generalize}}
\begin{proof}
We start with the same argument as in the proof of Theorem~\ref{thm:probablistic} and assume $\bV_i=-\bI$ for all $1\leq i\leq K-1$ and $\sum_{i=1}^K\bX_i=\mathbf{0}$. Then the proof can be divided into three steps. First, we show that it is sufficient to prove that a property defined in \eqref{eq:perp1} is satisfied for generic  $\bY\in\reals^{p\times KD}$ where $p=\max\left((K-1)D-N,0 \right)$. Then we establish that \eqref{eq:perp1} indeed holds for generic $\bY$. To this end, secondly, we show that any $\bY$ that does not satisfy this property lies in a certain set. Finally, in the third step, we show that this set is of measure zero.


\subsubsection{Step 1: reduction of the problem to the property \eqref{eq:perp1}}
By the assumption that $\bV_i=-\bI$ for all $1\leq i\leq K-1$, $\bX_1+\ldots+\bX_K=\mathbf{0}$, and
\[
\tr\Big(\bH(\bI,\ldots,\bI)^\top(\bI,\ldots,\bI)\Big)=\tr\Big((\bI,\ldots,\bI)\bH(\bI,\ldots,\bI)^\top\Big)=\|\bX_1+\ldots+\bX_K\|_F^2=0.
\]
Considering that $\tr(\bH\bC)\geq 0$ for any $\bH\psdge \mathbf{0}$, if the solution to the SDP problem is not uniquely given by $(\bI,\ldots,\bI)^\top(\bI,\ldots,\bI)$, then there exists $\bH\neq (\bI,\ldots,\bI)^\top(\bI,\ldots,\bI)$ such that $\tr(\bC\bH)=0$. Let $\bH=\bU\bU^\top$ for a matrix $\bU\in\mathbb{R}^{KD\times k}$, then using the properties of $\bH$, we have $\bU\in\mathcal{U}$ for\[\mathcal{U}=\left\{\bU=\left( \begin{array}{c}
\bU_1 \\
\vdots \\
 \bU_K \end{array} \right): \bU_i\in\mathbb{R}^{D\times k}, \bU_i\bU_i^\top=\bI, \forall 1\leq i\leq K, \bP_{\rmL_1^\perp}^\top\bU\neq \mathbf{0}\right\},\]
 $\rmL_1$ defined  by 
\[
\rmL_1=\{\bz\in\mathbb{R}^{KD}: \bz=(\bx,\bx,\ldots,\bx)\,\,\,\text{for some $\bx\in \mathbb{R}^{D}$} \},
\]
and $\bP_{\rmL_1^\perp}^\top\bU\neq \mathbf{0}$ means that $\bU_1, \bU_2, \cdots, \bU_K$ are not all the same.

Since $\tr(\bC\bH)=0$, we have $\|\bX\bU\|_F=0$. Let $\Sp(\bA)$ and $\Col(\bA)$ be the subspaces spanned by the row vectors of $\bA$ and the column vectors of $\bA$ respectively, then
\begin{align}
&\text{$\Sp(\bX)^\perp\supseteq \Col(\bU)$.}\label{eq:perp}
\end{align}

For any two subspace $\rmL$ and $\rmL'$, we use $\rmL+ \rmL'$ to represent the subspace $\{\bx+\by:\bx\in\rmL,\by\in\rmL'\}$, then we claim that to prove Theorem~\ref{thm:probablistic_generalize}, it is sufficient to prove the following statement with $p=\max\left((K-1)D-N,0 \right)$:  
\begin{align}
&\text{For generic $\bY\in\reals^{p\times KD}$, $\Col(\bU)\not\subseteq\Sp(\bY)+\rmL_1$ for every  $\bU\in\mathcal{U}$.}\label{eq:perp1}
\end{align}
The argument is as follows. Since $\sum_{i=1}^{K}\bX_i=\mathbf{0}$, \begin{align*}\dim(\Sp(\bX))=\rank(\bX)=&\rank([\bX_1,\bX_2,\cdots,\bX_{K-1},\bX_K])\\=&\rank([\bX_1,\bX_2,\cdots,\bX_{K-1}]),\end{align*} which is $\min\left((K-1)D, N \right)$ for generic $\{\bX_i\}_{i=1}^{K-1}\in\reals^{N\times D}$, so $\Sp(\bX)$ is a generic $\min\left((K-1)D, N \right)$-dimensional subspace in $\rmL_1^\perp$ (and $\rmL_1^\perp$ is a subspace of $\reals^{DK})$.

So, $\Sp(\bX)^\perp$ is the sum of $\rmL_1$ and a generic $p$-dimensional subspace in $\reals^{KD}$. Therefore, for generic $\{\bX\}_{i=1}^{K-1}$, $\Sp(\bX)^\perp$ is equivalent to $\Sp(\bY)+\rmL_1$, where $\bY$ is a generic matrix of size $p\times KD$. If \eqref{eq:perp1} holds, then \eqref{eq:perp} would not hold for generic $\{\bX\}_{i=1}^{K-1}$ and every $\bU\in\mathcal{U}$. By the analysis before \eqref{eq:perp}, the solution to the SDP problem is uniquely given by $(\bI,\ldots,\bI)^\top(\bI,\ldots,\bI)$, and Theorem~\ref{thm:probablistic_generalize} is proved.

\subsubsection{Step 2: finding matrices that do not satisfy \eqref{eq:perp1}}
In this part we show that every $\bY$ violating \eqref{eq:perp1} lies in the set $\cup_{d=1}^{p}\mathcal{M}_d$, where $\mathcal{M}_d$ is the range of the function\[
g_d\left(\{\bY_i\}_{i=0}^{K-1},\bZ,\rmL,\rmL_0\right)=\bP_{\rmL}\left[\left(\bY_1\bP_{\rmL_0}^\top,\cdots, \bY_K\bP_{\rmL_0}^\top\right)+
\left(\bY_0,\cdots, \bY_0\right)\right]+\bP_{\rmL^\perp}\bZ.
\]
The domain of the function $g_d$ is as follows: $\bY_i\in\reals^{d\times d}$ for $1\leq i\leq K-1$,  $\bY_0\in\reals^{d\times D}$, $\bZ\in\reals^{(p-d)\times KD}$, $\rmL_0$ is a $d$-dimensional subspace in $\reals^D$, and $\rmL$ is a $d$-dimensional subspace in $\reals^{p}$.  In addition, $\bY_K=-\sum_{i=1}^{K-1}\bY_i$.

For every $\bY$ violating \eqref{eq:perp1}, there exists $\bU\in\mathcal{U}$ such that $\Col(\bU)\subseteq\Sp(\bY)+\rmL_1$. We let $\bU=\bU^{(1)}+\bU^{(2)}$, where the columns of $\bU^{(1)}$ lie in $\rmL_1$ and the columns of $\bU^{(2)}$ are orthogonal to $\rmL_1$. As a result, $\Col(\bU^{(2)})$ is a subspace in $\reals^{KD}$ that intersects $\rmL_1$ only at origin, and we let $d=\dim(\Col(\bU^{(2)}))$. We have $d\geq 1$, since otherwise $\bU^{(2)}$ is a zero matrix, and $\bP_{\rmL_1^\perp}^\top\bU=\bP_{\rmL_1^\perp}^\top\bU^{(1)}=0$, which contradicts $\bU\in\mathcal{U}$.

Denote
\[
\bU^{(1)}=\left( \begin{array}{c}
\bU_1^{(1)} \\
\vdots \\
 \bU_K^{(1)} \end{array} \right)\,\,\,\text{and}\,\,\,\bU^{(2)}=\left( \begin{array}{c}
\bU_1^{(2)} \\
\vdots \\
 \bU_K^{(2)} \end{array} \right),
\]
then $\bU_1^{(1)} =\bU_2^{(1)} =\cdots=\bU_K^{(1)}$. Recall for all $1\leq i\leq K$, $\bU_i^{(1)}\bU_i^{(1)\,\top}+\bU_i^{(2)}\bU_i^{(2)\,\top}=\bU_i\bU_i^\top=\bI$, we have
\[
\bU_1^{(2)}\bU_1^{(2)\,\top} =\bU_2^{(2)}\bU_2^{(2)\,\top} =\cdots=\bU_K^{(2)}\bU_K^{(2)\,\top}.
\]
Therefore, \begin{equation}\Col\left(\bU_1^{(2)}\right)=\Col\left(\bU_2^{(2)}\right)=\ldots=\Col\left(\bU_K^{(2)}\right),\label{eq:U2_property}\end{equation} and \[\dim\left(\Col(\bU_1^{(2)})\right)=\rank\left(\bU_1^{(2)}\right)\leq \rank\left(\bU^{(2)}\right)=\dim\left( \Col(\bU^{(2)})\right)=d.\] 

Since $\Col(\bU)=\Col(\bU^{(2)})+\Col(\bU^{(1)})$, and $\Col(\bU^{(1)})\subseteq\rmL_1$, the assumption $\Sp(\bY)+\rmL_1\supseteq \Col(\bU)$ is equivalent to  $\Sp(\bY)+\rmL_1\supseteq \Col(\bU^{(2)})$. Recall $\dim\left(\Sp(\bY)\right)\leq p$, and $\Col(\bU^{(2)})$ is a subspace in $\reals^{KD}$ that intersects $\rmL_1$ only at origin, we have $d\leq p$.

Apply $\Sp(\bY)+\rmL_1\supseteq \Col(\bU^{(2)})$ and $\dim\left( \Col(\bU^{(2)})\right)=d$, there exists $\bY_0\in\reals^{d\times D}$ and $\rmL$, a $d$-dimensional subspace in $\reals^{p}$,  such that
\[
\Sp\left(\bP_{\rmL}^\top\left[\bY-\left(\bY_0,\bY_0,\cdots, \bY_0\right)\right]\right)=\Col\left(\bU^{(2)}\right).
\]
Recall the property \eqref{eq:U2_property}, there exists $\bY_i\in\reals^{d\times d}$ for $1\leq i\leq K$ and $\rmL_0$, a $d$-dimensional subspace in $\reals^D$ that contains $\Sp(\bU_{1}^{(2)\,\top})$, such that
\begin{equation}
\bP_{\rmL}^\top\left[\bY-\left(\bY_0,\bY_0,\cdots, \bY_0\right)\right]
=\left(\bY_1\bP_{\rmL_0}^\top,\bY_2\bP_{\rmL_0}^\top,\cdots, \bY_K\bP_{\rmL_0}^\top\right).
\label{eq:Y_property1}\end{equation}
In addition, since $\Col(\bU^{(2)})$ is orthogonal to $\rmL_1$. \begin{equation}\bY_K=-\sum_{i=1}^{K-1}\bY_i.\label{eq:Y_property2}\end{equation}

Combining \eqref{eq:Y_property1}, \eqref{eq:Y_property2}, and the estimation $1\leq d\leq p$, every $\bY$ that does not satisfy \eqref{eq:perp1} lies in the set $\cup_{d=1}^{p}\mathcal{M}_d$.

\subsubsection{Step 3: counting the dimension of $\mathcal{M}_d$}
In this step, we count the dimensions of $\mathcal{M}_d$  for all $1\leq d\leq p$, and show that they are smaller than $pKD$, the dimension of $\reals^{p\times KD}$, which implies that generic $\bY\in \reals^{p\times KD}$ does not belong to $\cup_{d=1}^p\mathcal{M}_d$, and \eqref{eq:perp1} is proved.

For any $d$, the degree of freedom is $\{\bY_i\}_{i=1}^{K-1}$ is $(K-1)d^2$, the degree of freedom of $\bY_0$ is $Dd$, the degree of freedom of $\bZ$ is $KD(p-d)$, the degree of freedom of $\rmL$ and $\rmL_1$ are $d(p-d)$ and $d(D-d)$ respectively. Considering that $d\leq p\leq D-1$, the total dimension of $\mathcal{M}_d$ is smaller than $KDp$, the dimension of $\reals^{p\times KD}$. Since $g_d$ is smooth and the dimension of its range is larger than its domain, all elements in $\mathcal{M}_d$ are its critical values. Applying~\cite[Theorem 6.8]{lee2003introduction}, $\mathcal{M}_d$ has measure zero in $\reals^{(D-1)\times KD}$. Therefore, $\reals^{(D-1)\times KD}\setminus\mathcal{M}_d$ is dense. Since $\reals^{(D-1)\times KD}\setminus\mathcal{M}_d$ is a closed set, generic $\bY$ does not lie in the set $\mathcal{M}_d$. Combining this result for all $1\leq d\leq p$, generic $\bY$ does not lie in the set $\cup_{d=1}^{p}\mathcal{M}_d$.
\end{proof}

\subsection{Proof of Lemma~\ref{lemma:pertubation}}\label{sec:lemmaproof}
We first state two lemmas that are rather easy to verify.
\begin{lemma}\label{lemma:frobenius}When $\bA\in\reals^{m\times n}$, $\bB\in\reals^{n\times l}$, where $l\geq n$ and the singular values of $\bB$ are $\sigma_1\geq \sigma_2\geq\cdots\geq \sigma_n$, then \begin{equation} \sigma_n\|\bA\|_F\leq \|\bA\bB\|_F\leq \sigma_1\|\bA\|_F.\label{eq:frobenius}\end{equation}\end{lemma}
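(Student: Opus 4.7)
The plan is to reduce the statement to the orthogonal invariance of the Frobenius norm via the singular value decomposition of $\bB$. Because $\bB\in\reals^{n\times l}$ with $l\geq n$, we can write $\bB=\bU\bSigma\bV^\top$, where $\bU\in\reals^{n\times n}$ is orthogonal, $\bSigma=\diag(\sigma_1,\ldots,\sigma_n)\in\reals^{n\times n}$, and $\bV\in\reals^{l\times n}$ has orthonormal columns (so $\bV^\top\bV=\bI_n$, though $\bV\bV^\top$ need not be the identity). This compact SVD form is exactly what lets us absorb the factor $\bV^\top$ without any loss.

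Next I would compute $\|\bA\bB\|_F^2$ using the trace identity and substitute the SVD:
\[
\|\bA\bB\|_F^2=\tr(\bA\bU\bSigma\bV^\top\bV\bSigma\bU^\top\bA^\top)=\tr(\bA\bU\bSigma^2\bU^\top\bA^\top)=\|\bA\bU\bSigma\|_F^2.
\]
Setting $\bC\defeq\bA\bU\in\reals^{m\times n}$, right-multiplication by the orthogonal matrix $\bU$ preserves the Frobenius norm, so $\|\bC\|_F=\|\bA\|_F$. The product $\bC\bSigma$ just scales the $j$-th column of $\bC$ by $\sigma_j$, hence
\[
\|\bC\bSigma\|_F^2=\sum_{i=1}^{m}\sum_{j=1}^{n}C_{ij}^2\,\sigma_j^2.
\]

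Finally, sandwich each $\sigma_j^2$ between $\sigma_n^2$ and $\sigma_1^2$ to obtain
\[
\sigma_n^2\|\bC\|_F^2\;\leq\;\|\bC\bSigma\|_F^2\;\leq\;\sigma_1^2\|\bC\|_F^2,
\]
and substitute back $\|\bC\|_F=\|\bA\|_F$ and $\|\bC\bSigma\|_F=\|\bA\bB\|_F$ to conclude \eqref{eq:frobenius}. There is no real obstacle here: the only subtlety is making sure to use the thin/compact SVD of $\bB$ (since $l\geq n$) so that $\bV^\top\bV=\bI_n$ disappears cleanly inside the trace; had one used the full SVD with $\bV\in\reals^{l\times l}$ and a rectangular $\bSigma\in\reals^{n\times l}$, the same argument goes through but with slightly more bookkeeping.
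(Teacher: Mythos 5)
Your proof is correct and follows essentially the same approach as the paper: both use the SVD of $\bB$, absorb the orthogonal factor into $\bA$ by the orthogonal invariance of the Frobenius norm, and then bound the effect of the diagonal $\bSigma$ by sandwiching each singular value between $\sigma_n$ and $\sigma_1$. The only organizational difference is that you work with the whole matrix at once via the trace identity, whereas the paper first proves the analogous inequality for a single vector $\bx^\top\bB$ and then sums over the rows of $\bA$.
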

\begin{proof}
First we claim that for any $\bx\in\reals^n$,
\begin{equation}\label{eq:frobenius1}
\sigma_n^2\|\bx\|^2\leq \|\bx^\top\bB\|^2\leq \sigma_1^2\|\bx\|^2.
\end{equation}
Assuming that the SVD decomposition of $\bB$ is given by $\bB=\bU_{\bB}\diag(\sigma_1,\cdots,\sigma_n)\bV_{\bB}^\top$, then the first inequality in \eqref{eq:frobenius1} can be proved as follows:
\[
\|\bx^\top\bB\|^2=\bx^\top\bB\bB^\top\bx=\bx^\top\bU_{\bB}\diag(\sigma_1^2,\cdots,\sigma_n^2)\bU_{\bB}^\top\bx\geq \sigma_n^2\|\bx^\top\bU_{\bB}\|^2=\sigma_n^2\|\bx\|^2,\]
and second inequality in \eqref{eq:frobenius1} can be proved similarly.

Assuming that $\bA=(\mathbf{a}_1,\ldots,\mathbf{a}_m)$, and combining  \eqref{eq:frobenius1} with $\bx=\mathbf{a}_i$ for $1\leq i\leq m$, \eqref{eq:frobenius} is proved.\end{proof}

\begin{lemma}\label{lemma:eigenvalue}
The smallest eigenvalue of
\[
\left( \begin{array}{cc}
a&b \\
b&0\end{array}\right)
\]
is smaller than
\[
-\frac{b^2}{\max(a,0)+|b|}.
\]
\end{lemma}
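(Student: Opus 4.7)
The plan is to reduce the claimed inequality to an elementary algebraic statement in $a$ and $b$ alone, and verify it by squaring after a short case split on the sign of $a$. First I would write out the characteristic polynomial $\lambda^2-a\lambda-b^2=0$, so the two eigenvalues are $\lambda_{\pm}=(a\pm\sqrt{a^2+4b^2})/2$; rationalizing the numerator of the smaller one gives the useful form
$$\lambda_{\min}=\frac{a-\sqrt{a^2+4b^2}}{2}=\frac{-2b^2}{a+\sqrt{a^2+4b^2}}.$$
The desired bound $\lambda_{\min}\leq -b^2/(\max(a,0)+|b|)$ then reduces (after cancelling the common positive factor $b^2$; the case $b=0$ is trivial since then $\lambda_{\min}=\min(a,0)\leq 0$) to the equivalent inequality
$$2\bigl(\max(a,0)+|b|\bigr)\;\geq\;a+\sqrt{a^2+4b^2}.$$

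Next I would dispatch this inequality by two very short cases. If $a\geq 0$ it reads $a+2|b|\geq\sqrt{a^2+4b^2}$; both sides are nonnegative, so squaring gives $a^2+4a|b|+4b^2\geq a^2+4b^2$, which holds because $a|b|\geq 0$. If $a<0$ it becomes $2|b|-a\geq\sqrt{a^2+4b^2}$; here the left side is positive, and squaring gives $a^2-4a|b|+4b^2\geq a^2+4b^2$, which holds since $-a|b|\geq 0$.

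I do not anticipate any real obstacle here, since the lemma is purely a $2\times 2$ computation. The only things to be careful about are sign tracking when squaring (so that the squaring step is an equivalence on the relevant side) and the corner case $a\leq 0$, $b=0$ where the right-hand side is a formal $0/0$; in that case the inequality should be read as vacuous (or as $\lambda_{\min}\leq 0$, which holds since $\lambda_{\min}=a\leq 0$).
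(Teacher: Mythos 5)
Your proposal is correct and follows essentially the same route as the paper: compute the characteristic polynomial, rationalize to get $\lambda_{\min}=-2b^2/(a+\sqrt{a^2+4b^2})$, and then bound the denominator. The paper simply invokes $\sqrt{a^2+4b^2}\leq|a|+2|b|$ and uses $a+|a|=2\max(a,0)$, whereas you establish the equivalent denominator bound by a sign case-split and squaring; this is a minor presentational difference, not a different argument.
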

\begin{proof}
The smaller eigenvalue is \[\frac{a-\sqrt{a^2+4b^2}}{2}
=-\frac{2b^2}{a+\sqrt{a^2+4b^2}}\leq -\frac{2b^2}{a+(|a|+2|b|)}=-\frac{b^2}{\max(a,0)+|b|}.
\]
\end{proof}
\begin{proof}[Proof of Lemma~\ref{lemma:pertubation}]
First of all, WLOG we may assume that
\begin{equation}\label{eq:WLOG}
\bY=\left( \begin{array}{c}
\bI_{M\times M}  \\
\mathbf{0}_{M\times D-M} \end{array}\right).
\end{equation}
If we proved the case \eqref{eq:WLOG}, then other cases can be proved as follows. For generic $\bY$, there are invertible matrices $\bB\in\reals^{M\times M}$ and $\bA\in\reals^{D\times D}$ such that \[\bB\bY\bA=\left( \begin{array}{c}
\bI_{M\times M}  \\
\mathbf{0}_{M\times D-M} \end{array}\right).\] Note that
\[
(\bA^\top\bL\bB^{-1})(\bB\bY\bA)+(\bB\bY\bA)^\top(\bA^\top\bL\bB^{-1})^\top
=\bA^\top(\bL\bY+\bY^\top\bL^\top)\bA,
\]
we have
\[
\lambda_{\min}((\bA\bL\bB^{-1})(\bB\bY\bA)+(\bB\bY\bA)^\top(\bA\bL\bB^{-1})^\top)
\geq -\|\bA\|^2\epsilon,
\]
and similarly
\[
\lambda_{\min}((\bA\bL\bB^{-1})(\bB\bZ\bA)+(\bB\bZ\bA)^\top(\bA\bL\bB^{-1})^\top)
\geq -\|\bA\|^2\epsilon.
\]
Since the case \eqref{eq:WLOG} is assumed to be proved,
\[
\|\bA\bL\bB^{-1}\|_F\leq \epsilon\|\bA\|^2 c(\bB\bY\bA,\bB\bZ\bA).
\]
Applying Lemma~\ref{lemma:frobenius}, $\|\bA\bL\bB^{-1}\|_F\geq \|\bL\|_F\sigma_{\min}(\bA)/\|\bB\|$, and the generic case is proved with $c(\bY,\bZ)=\frac{\|\bA\|^2\|\bB\|}{\sigma_{\min}(\bA)} c(\bB\bY\bA,\bB\bZ\bA)$, which is positive for generic $\bZ$ since $c(\bB\bY\bA,\bB\bZ\bA)$ is positive for generic $\bB\bZ\bA$.

The rest of the proof will assume \eqref{eq:WLOG} and use induction on $k$, which is the integer such that $k(D-M)< M\leq (k+1)(D-M)$. For $k=1$, let us denote
\[
\bL=\left( \begin{array}{c}
\bL_{1}  \\
\bL_{2} \end{array}\right),\,\,\,\,\text{$\bZ=(\bZ_{1}, \bZ_{2})$,}
\]
where $\bL_1,\bZ_1\in\reals^{M\times M}$, $\bL_2\in\reals^{M\times (D-M)}$, and $\bZ_2\in\reals^{(D-M)\times M}$. Then \[
\bL\bY+\bY^\top\bL^\top=
\left( \begin{array}{cc}
\bL_{1}+\bL_{1}^\top  & \bL_{2}^\top\\
\bL_{2}  & \mathbf{0}\end{array}\right)
\]
If $\|\bL_2\|=b$ and $\|\bL_1\|=a$, then there exists $\bu\in\reals^{M},\bv\in\reals^{D-M}$ such that $\|\bu\|=\|\bv\|=1$ and $\bu^\top\bL_2\bv=b$. Then
\[
\left( \begin{array}{cc}
\bu  & 0\\
0 & \bv \end{array}\right)(\bL\bY+\bY^\top\bL^\top)\left( \begin{array}{cc}
\bu  & 0\\
0 & \bv \end{array}\right)^\top=\left( \begin{array}{cc}
\bu^\top(\bL_{1}+\bL_{1}^\top)\bu  & b\\
b  & 0 \end{array}\right),
\]
and its smallest eigenvalue is larger than $-\epsilon$ since $\left( \begin{array}{cc}
\bu  & 0\\
0 & \bv \end{array}\right)^\top\left( \begin{array}{cc}
\bu  & 0\\
0 & \bv \end{array}\right)=\bI$. Applying Lemma~\ref{lemma:eigenvalue} with the estimation  $\bu^\top(\bL_{1}+\bL_{1}^\top)\bu\leq 2a$, we have
\begin{equation}\label{eq:eigenvalue1}
\text{ $b^2\leq \epsilon(2a+b)$}.
\end{equation}
 In another aspect, applying Lemma~\ref{lemma:frobenius} we have
 \[
 \text{For generic $\bZ$, $C_1\|\bL_1\|_F\geq \|\bL_1\bZ_2\|_F\geq C_2\|\bL_1\|_F$ for some $C_1, C_2>0$.}
 \]
Therefore, we can find $\bu\in\reals^{M}$ and $\bv\in\reals^{N-M}$ such that  $|\bu^\top(\bL_1\bZ_2)\bv|\geq C_2 a$. Note that
\[
\left( \begin{array}{cc}
\bu  & 0\\
0 & \bv \end{array}\right)(\bL\bZ+\bZ^\top\bL^\top)\left( \begin{array}{cc}
\bu  & 0\\
0 & \bv \end{array}\right)^\top
=\left( \begin{array}{cc}
\bu^\top(\bL_1\bZ_1+\bZ_1^\top\bL_1^\top)\bu  & \bu^\top(\bZ_2^\top\bL_1^\top)\bv \\
\bu^\top(\bL_1\bZ_2)\bv & 0 \end{array}\right)+\bL',
\]
where $\|\bL'\|<C_3 b$. Since $|\bu^\top(\bL_1\bZ_1+\bZ_1^\top\bL_1^\top)\bu|\leq C_4 a$ for some $C_4>0$, Lemma~\ref{lemma:eigenvalue} shows that the smallest eigenvalue  of
\[
\left( \begin{array}{cc}
\bu^\top(\bL_1\bZ_1+\bZ_1^\top\bL_1^\top)\bu  & \bu^\top(\bZ_2^\top\bL_1^\top)\bv \\
\bu^\top(\bL_1\bZ_2)\bv & 0 \end{array}\right)
\]is smaller than
\[
-\frac{C_2^2}{C_4+C_1}a.\]
Let $C_4'=\frac{C_2^2}{C_4+C_1}$, applying $\lambda_{\min}(\bL\bZ+\bZ^\top\bL^\top)\geq -\epsilon$ we have
\begin{equation}\label{eq:eigenvalue1_2}
C_3b-C_4'a>-\epsilon.
\end{equation}
This means $a<\frac{1}{C_4'}(C_3b+\epsilon)$. Plug it into \eqref{eq:eigenvalue1}, we have
\[
b^2\leq \epsilon(b+2a)\leq\epsilon\Big(b+\frac{2}{C_4'}(C_3b+\epsilon)\Big)=\epsilon \Big(1+\frac{2C_3}{C_4'}\Big) b+\frac{2}{C_4'}\epsilon^2,
\]
which implies that $b<C\epsilon$. Applying \eqref{eq:eigenvalue1_2}, we also have $a<C'\epsilon$ and the Lemma is proved for $k=1$.

For $k>1$, let us first write
\[
\bL=\left( \begin{array}{cc}
\bL_{1,1}  & \bL_{1,2}\\
\bL_{2,1}  & \bL_{2,2}\end{array}\right),
\]
where $\bL_{1,1}\in\reals^{M\times (2M-D)}$, $\bL_{1,2}\in\reals^{M\times (D-M)}$, $\bL_{2,1}\in\reals^{(D-M)\times (2M-D)}$, $\bL_{2,2}\in\reals^{(D-M)\times (D-M)}$ and similarly write
\[
\bZ=\left( \begin{array}{cc}
\bZ_{1,1}  & \bZ_{1,2}\\
\bZ_{2,1}  & \bZ_{2,2}\end{array}\right),
\]
where $\bZ_{1,1}\in\reals^{(2M-D)\times M}$, $\bZ_{1,2}\in\reals^{(D-M)\times M}$, $\bZ_{2,1}\in\reals^{(2M-D)\times (D-M)}$, $\bZ_{2,2}\in\reals^{(D-M)\times (D-M)}$.
WLOG we may assume that $\bZ_{1,2}=\mathbf{0}$, by finding an appropriate orthogonal matrix $\bA\in\reals^{M\times M}$ and consider $(\bL\bA',\bA\bY,\bA\bY)$ instead of $(\bL,\bY,\bZ)$, then
\[
\bZ=\left( \begin{array}{cc}
\bZ_{1,1}  & \mathbf{0}\\
\bZ_{2,1}  & \bZ_{2,2}\end{array}\right),
\]
and for generic $\bZ$, $\bZ_{2,2}$ is full-rank, i.e., the rank is $D-M$.
Let $b=\|(\bL_{2,1},\bL_{2,2})\|_F$, $a_1=\|\bL_{1,1}\|_F$ and $a_2=\|\bL_{1,2}\|_F$. Then following the proof of \eqref{eq:eigenvalue1}, applying $\lambda_{\min}(\bL\bY+\bY^\top\bL^\top)\geq -\epsilon$ we have
\begin{equation}\label{eq:eigenvalue2}
\frac{b^2}{a_1+a_2+b}\leq \epsilon.
\end{equation}
If another aspect,
\[
\bL\bZ+\bZ^\top\bL^\top=\left( \begin{array}{cc}
\bL_{1,1}\bZ_{1,1}+\bL_{1,2}\bZ_{2,1}+\bZ_{1,1}^\top\bL_{1,1}^\top+\bZ_{1,2}^\top\bL_{2,1}^\top& \bL_{1,2}\bZ_{2,2}\\
\bZ_{2,2}^\top\bL_{1,2}^\top& \mathbf{0}\end{array}\right)+\bL',
\]
where $\|\bL'\|<C_1b$. Again Lemma~\ref{lemma:frobenius} means that we have $C_2a_2\leq \|\bL_{1,2}\bZ_{2,2}\|\leq C_3 a_2$ and $\|\bL_{1,1}\bZ_{1,1}+\bL_{1,2}\bZ_{2,1}+\bZ_{1,1}^\top\bL_{1,1}^\top+\bZ_{1,2}^\top\bL_{2,1}^\top\|\leq C_4 (a_1+a_2)$. Lemma~\ref{lemma:eigenvalue} then implies
\begin{equation}\label{eq:eigenvalue3}
\frac{C_2^2a_2^2}{C_4(a_1+a_2)+C_3a_2}\leq \epsilon+C_1b.
\end{equation}
At last, note that $\bL_{1,1}\bZ_{1,1}+\bL_{1,2}\bZ_{2,1}+\bZ_{1,1}^\top\bL_{1,1}^\top+\bZ_{1,2}^\top\bL_{2,1}^\top$ is a submatrix of $\bL\bZ+\bZ^\top\bL^\top$, so its smallest eigenvalue is also larger than $-\epsilon$. Since there exists $C_5$ such that $\|\bL_{1,2}\bZ_{2,1}+\bZ_{1,2}^\top\bL_{2,1}^\top\|\leq C_5 a_2$, the smallest eigenvalue of $\bL_{1,1}\bZ_{1,1}+\bZ_{1,1}^\top\bL_{1,1}^\top$ is larger than $-\epsilon-C_5a_2$. Let
\[
\bY_{1,1}=\left( \begin{array}{c}
\bI_{(2M-D)\times (2M-D)}  \\
\mathbf{0}_{(2M-D)\times (D-M)} \end{array}\right)
\]
and using the same argument, the smallest eigenvalue of $\bL_{1,1}\bZ_{1,1}+\bZ_{1,1}^\top\bL_{1,1}^\top$ is larger than $-\epsilon-C_5'a_2$.  Since $(k-1)(D-M)<2M-D\leq k(D-M)$, we may apply the case $k-1$ to $\bL_{1,1}$, $\bY_{1,1}$ and $\bZ_{1,1}$ and have
\begin{equation}\label{eq:eigenvalue4}
a_1\leq C_6((C_5+C_5')a_2+\epsilon).
\end{equation}

Plug in \eqref{eq:eigenvalue4} to \eqref{eq:eigenvalue2} and \eqref{eq:eigenvalue3} we have
\begin{equation}\label{eq:eigenvalue5}
{b^2}\leq \epsilon C_7(\epsilon+a_2+b)
\end{equation}
and
\begin{equation}\label{eq:eigenvalue6}
a_2^2\leq  C_8(\epsilon+a_2)(\epsilon+b),\,\,\text{i.e.,}a_2(a_2-C_8b)\leq \epsilon C_8(\epsilon+a_2+b).
\end{equation}

If $a_2\geq 2C_8b$, then \eqref{eq:eigenvalue6} implies
\[
\frac{1}{2}a_2^2\leq \epsilon C_8(\epsilon+a_2+\frac{1}{2C_8}a_2),
\]
which implies $a_2\leq C\epsilon$. This then implies $b\leq C'\epsilon$ (from assumption) and $a_1\leq C''\epsilon$ (from \eqref{eq:eigenvalue4}).

If $a_2< 2C_8b$, then \eqref{eq:eigenvalue5} implies $b<C\epsilon$, which then implies $a_1, a_2<C'\epsilon$.

For either case we have $a_1+a_2+b< C'''\epsilon$, and Lemma~\ref{lemma:pertubation} is proved since $\|\bL\|_F\leq \|\bL_{1,1}\|_F+\|\bL_{1,2}\|_F+\|(\bL_{2,1},\bL_{2,2})\|_F=a+b+c$.
\end{proof}

\section*{Acknowledgment}
The authors were partially supported by Award Number R01GM090200 from the NIGMS, FA9550-12-1-0317 and FA9550-13-1-0076 from AFOSR, LTR DTD 06-05-2012 from the Simons Foundation, and the Moore Foundation Data-Driven Discovery Investigator Award. The authors would like to thank the anonymous reviewers for their helpful comments and suggestions.\section*{References}
\bibliography{bib-online}

\begin{thebibliography}{10}
\expandafter\ifx\csname url\endcsname\relax
  \def\url#1{\texttt{#1}}\fi
\expandafter\ifx\csname urlprefix\endcsname\relax\def\urlprefix{URL }\fi
\expandafter\ifx\csname href\endcsname\relax
  \def\href#1#2{#2} \def\path#1{#1}\fi

\bibitem{gower2004procrustes}
J.~Gower, G.~Dijksterhuis, Procrustes Problems, Oxford Statistical Science
  Series.

\bibitem{Afonso2013}
A.~S. Bandeira, C.~Kennedy, A.~Singer, Approximating the little {G}rothendieck
  problem over the orthogonal and unitary groups, Mathematical Programming
  (2016) 1--43.

\bibitem{Nemirovski2007}
A.~Nemirovski, Sums of random symmetric matrices and quadratic optimization
  under orthogonality constraints, Mathematical Programming 109~(2-3) (2007)
  283--317.

\bibitem{Goemans1995}
M.~X. Goemans, D.~P. Williamson, Improved approximation algorithms for maximum
  cut and satisfiability problems using semidefinite programming, J. ACM 42~(6)
  (1995) 1115--1145.

\bibitem{Gower1975}
J.~Gower, Generalized procrustes analysis, Psychometrika 40~(1) (1975) 33--51.

\bibitem{Berge1977}
J.~Ten~Berge, Orthogonal procrustes rotation for two or more matrices,
  Psychometrika 42~(2) (1977) 267--276.

\bibitem{Shapiro1988}
A.~Shapiro, J.~D. Botha, Dual algorithm for orthogonal procrustes rotations,
  SIAM Journal on Matrix Analysis and Applications 9~(3) (1988) 378--383.

\bibitem{Bandeira2016}
A.~S. Bandeira, C.~Kennedy, A.~Singer, Approximating the little grothendieck
  problem over the orthogonal and unitary groups, Mathematical Programming
  160~(1) (2016) 433--475.

\bibitem{kam1980}
Z.~Kam, The reconstruction of structure from electron micrographs of randomly
  oriented particles, Journal of Theoretical Biology 82~(1) (1980) 15 -- 39.

\bibitem{Tejal2014}
T.~Bhamre, T.~Zhang, A.~Singer, Orthogonal matrix retrieval in cryo-electron
  microscopy, 12th IEEE International Symposium on Biomedical Imaging (ISBI
  2015).

\bibitem{So2007}
A.~M.-C. So, J.~Zhang, Y.~Ye, On approximating complex quadratic optimization
  problems via semidefinite programming relaxations, Math. Program. 110~(1)
  (2007) 93--110.

\bibitem{So2011}
A.-C. So, Moment inequalities for sums of random matrices and their
  applications in optimization, Mathematical Programming 130~(1) (2011)
  125--151.

\bibitem{Naor2013}
A.~Naor, O.~Regev, T.~Vidick, Efficient rounding for the noncommutative
  grothendieck inequality, in: Proceedings of the Forty-fifth Annual ACM
  Symposium on Theory of Computing, STOC '13, 2013, pp. 71--80.

\bibitem{Keller1975}
J.~B. Keller, Closest unitary, orthogonal and {H}ermitian operators to a given
  operator, Mathematics Magazine 48~(4) (1975) 192--197.

\bibitem{cvx}
M.~Grant, S.~Boyd, {CVX}: Matlab software for disciplined convex programming,
  version 2.1, \url{http://cvxr.com/cvx} (Mar. 2014).

\bibitem{SDPT2003}
H.~R. T{\"u}t{\"u}nc{\"u}, C.~K. Toh, J.~M. Todd, Solving
  semidefinite-quadratic-linear programs using sdpt3, Mathematical Programming
  95~(2) (2003) 189--217.

\bibitem{lee2003introduction}
J.~Lee, Introduction to Smooth Manifolds, Graduate Texts in Mathematics,
  Springer, 2003.

\end{thebibliography}

\end{document}